\newtheorem{theorem}{Theorem}[section]
\newtheorem{lemma}[theorem]{Lemma}
\newtheorem{proposition}[theorem]{Proposition}
\newtheorem{corollary}[theorem]{Corollary}
\theoremstyle{definition}
\numberwithin{equation}{section}
\theoremstyle{remark}
\newtheorem{remark}[theorem]{Remark}
\theoremstyle{plain}
\newcommand{\ignore}[1]{}
\newcommand{\D}{\mathbb D}
\newcommand{\T}{\mathbb T}
\newcommand{\calA}{{\mathcal{A}^2}}
\newcommand{\calB}{{\mathcal{B}}}
\newcommand{\calD}{{\mathcal{D}_0}}
\newcommand{\calK}{{\mathcal{T}}(\mathcal{D}_0)}
\newcommand{\ip}[1]{\left\langle #1 \right\rangle}
\newcommand{\norm}[1]{\left\lVert #1 \right\rVert}
\newcommand{\abs}[1]{\left\lvert #1 \right\rvert}
\newcommand{\dou}{\partial}
\newcommand{\parz}[1]{\frac{\partial #1}{\partial z}}
\renewcommand{\le}{\leqslant}
\renewcommand{\ge}{\geqslant}
\renewcommand{\tilde}{\widetilde}
\title{Brown Halmos Operator Identity and Toeplitz Operators on the Dirichlet Space }
\author[A. Kujur]{Ashish Kujur}
\address[A. Kujur]{School of Mathematics \\
Indian Institute of Science Education and Research \\
Thiruvananthapuram}
\email{ashishkujur23@iisertvm.ac.in}
\author[M. R. Reza]{Md. Ramiz Reza}
\address[M. R. Reza]{School of Mathematics \\
Indian Institute of Science Education and Research \\
Thiruvananthapuram}
\email{ramiz@iisertvm.ac.in}
\thanks{The first author was supported through the Junior Research Fellowship by the Council of Scientific and Industrial Research (CSIR), India - Ref. No. 09/0997(18166)/2024-EMR-I}
\keywords{Toeplitz Operators, Dirichlet space, Ces{\`a}ro mean}
\subjclass[2010]{Primary  47B35, 46E22 Secondary 47A62}
\date{}
\begin{document}

\begin{abstract}
A well known result of Brown and Halmos shows that the Toeplitz operators induced by $L^{\infty}(\mathbb T)$ symbols on the Hardy space of the unit disc $\mathbb D$ are characterized by the operator identity $T_{\bar{z}}AT_z=A,$ where $T_z, T_{\bar{z}}$ are the Toeplitz operators induced by the function $z$ and $\bar{z}$ on the unit circle $\mathbb T$ respectively. In this paper we introduce and study a class of Toeplitz operators on the Dirichlet space $\calD$ induced by a symbol class $\mathcal T(\calD)= \overline{H^{\infty}_0(\mathbb D)} + \mathcal M(\calD),$ where $H^{\infty}_0(\mathbb D)$ denotes the set of all bounded analytic function on $\mathbb D$ vanishing at $0$ and $\mathcal M(\calD)$ denotes the multiplier algebra of the Dirichlet space $\mathcal D_0.$ We find that the Toeplitz operators on the Dirichlet space $\calD$ induced by the symbol class $\mathcal T(\calD)$ is completely characterized by the operator identity $T_{\bar{z}}AT_z=A.$  
\end{abstract}

\maketitle


\section{Introduction}
The theory of Toeplitz operators on  several Hilbert function spaces, for example, the Hardy space and the weighted Bergman spaces, is a well established subject. It has applications to and connection with many other areas, for example, function theory, prediction theory, singular integral equations, C* algebras and other areas of operator theory. Brown and Halmos had shown that a bounded operator $A$ acting on the Hardy space $H^2(\mathbb D)$ of the open unit disc $\mathbb D,$ is a Toeplitz operator $T_{g}$ induced by a unique symbol $g\in L^{\infty}(\mathbb T)$ if and only if $A$ satisfies the operator identity $T_{\bar{z}}AT_z=A,$  see \cite[Theorem 6]{BHT63}. It was well known that the Toeplitz operators with $L^{\infty}(\mathbb D)$ symbols on the classical Bergman spaces can not be characterized by such Brown Halmos type operator identity, see \cite{Englis88}. Nevertheless Louhichi and Olofsson characterized the Toeplitz operators induced by bounded harmonic symbols on the weighted Bergman spaces through an operator identity, see \cite{Olof08}. Later these results  has been extended to characterize Toeplitz operators with pluriharmonic symbol on the analytic Besov spaces on the unit ball in $\mathbb C^m,$ see \cite{Eschmier2019}. 

Toeplitz operators induced by various class of symbols on the Dirichlet space have been studied intensively in recent times, see for example \cite{Antti2017, Chen09, ChenNguyen10, DuiLee2004, Lee2007, Lee2009, LeeZhu2011, Lin2018, LuoXiao2018, RochbergWu1992, Wu1992, Yu2010} and the references therein. In this article we study the Toeplitz operators on the Dirichlet space induced by a class of symbols $\mathcal T(\calD)$ and find that the operator identity $T_{\bar{z}}AT_z=A$  completely characterize these class of Toeplitz operators among the set of all bounded operators on the Dirichlet space.

We start with some basic notations, a few definitions and some known results about Toeplitz operators on the Dirichlet space in order to introduce the symbol class $\mathcal T(\calD)$ and present our results. Let $dA=\frac{1}{\pi}dxdy$ be the normalized Lebesgue area measure on $\mathbb D.$ Let $L^2(\D)$ denotes the space of all complex valued measurable functions on $\mathbb D$ which are square integrable with respect to the Lebesgue area measure $dA$ and $L^{\infty}(\mathbb D)$ be the algebra of all essentially bounded measurable functions on $\mathbb D.$  The Sobolev space $W^{1,2}(\mathbb D)$ consists of all locally integrable complex valued functions $f$ on $\mathbb D$ for which  $f\in L^{2} \left( \D \right)$ and furthermore, the weak derivatives $\frac{\dou f}{\dou z}$ and $\frac{\dou f}{\dou \bar z}$ belong to $L^2(\D).$  It is well known that $W^{1,2}(\mathbb D)$ is a Hilbert space with respect to the inner product given by 
\begin{align*}
\Big\langle f,g \Big\rangle_{1,2}= \Big(\int_{\D} f \,dA\Big) \Big(\overline{\int_{\D} g \,dA} \Big)+ \Big\langle \frac{\dou f}{\dou z}, \frac{\dou g}{\dou z} \Big\rangle_{L^2(\D)}  + \Big\langle \frac{\dou f}{\dou \bar z}, \frac{\dou g}{\dou \bar z} \Big\rangle_{L^2(\D)},
\end{align*}
 see \cite[Chapter III]{Adams75} for more results on Sobolev space. The Dirichlet space $\mathcal D_0$ is the closed subspace of all holomorphic functions $f\in W^{1,2}(\mathbb D)$ with $f(0)=0.$  It is well known that $\mathcal D_0$ is a reproducing kernel Hilbert space with the kernel function given by
 \begin{align*}
 R_w(z)= \sum\limits_{k=1}^{\infty}\frac{z^k\bar{w}^k}{k}= \ln \frac{1}{(1-z\bar w)},\,\,\,z,w\in\mathbb D.
 \end{align*}
For any $\varphi \in W^{1,2}(\mathbb D),$ we have that 
$$\varphi|_{_\mathbb T}(e^{i\theta}):= \lim\limits_{r\to 1-}\varphi (re^{i\theta})$$ exists for almost all $\theta\in[0,2\pi],$ see \cite[Proposition 1]{ChenNguyen10}. Moreover $\varphi|_{_\mathbb T}\in L^1(\mathbb T)$ and $\Phi= P[\varphi|_{_\mathbb T}],$ the Poisson extension of $\varphi|_{_\mathbb T}$ on $\mathbb D,$ belongs to $W^{1,2}(\mathbb D),$ see \cite[Theorem 1]{ChenNguyen10}.
We let $\Lambda_0$ be the subspace of $W^{1,2}(\mathbb D)$ given by 
\begin{align*}
\Lambda_0 := \{ f\in W^{1,2}(\mathbb D) : f|_{_\mathbb T}=0 \}.
\end{align*}
Note that $\varphi-\Phi \in \Lambda_0$ and $\varphi = (\varphi -\Phi) +\Phi$ is an orthogonal decomposition of $\phi$ in $\Lambda_0$ and $\Lambda_0^{\perp}$ in $W^{1,2}(\mathbb D).$ Moreover the space $W^{1,2}(\mathbb D)$ decomposes as the following way
\begin{align*}
W^{1,2}(\mathbb D)= \Lambda_0  \oplus \overline{\mathcal D_0}\oplus \mathbb C \oplus \mathcal D_0,
\end{align*}
see \cite[Theorem 1]{ChenNguyen10}. Let $P_{\mathcal D_0}$ be the orthogonal projection from $W^{1,2}(\mathbb D)$ onto $\mathcal D_0.$ Using the reproducing property of the kernel function $R_w,$ one can see that $P_{\mathcal D_0}$ can be represented by the following integral formula:
\begin{align}\label{Projection formula}
(P_{\mathcal D_0}f)(w)= \Big\langle f,R_w \Big\rangle_{1,2}= \int_{\mathbb D} \Big ( \frac{\dou f (z)}{\dou z} \Big) \Big(\overline{\frac{\dou R_w (z)}{\dou z}}\Big)\,dA(z)  \Big),\,\, w\in\mathbb D,
\end{align} 
for every $f\in  W^{1,2}(\mathbb D).$ 
For a function $\varphi\in W^{1,2}(\mathbb D),$ the Toeplitz operator $T_{\varphi}$ with symbol $\varphi,$ is a densely defined operator on $\mathcal D_0$ given by 
\begin{align}\label{Toeplitz by projection}
T_{\varphi}(f)= P_{\mathcal D_0}(\varphi f),
\end{align}
for all $f\in $ Dom$(T_{\varphi})= \{g\in\calD: \varphi g \in W^{1,2}(\mathbb D)\}.$ 
In view of \eqref{Projection formula}, note that for any $\varphi\in W^{1,2}(\mathbb D)$ and  $f \in $ Dom$(T_{\varphi}),$ we have 
\begin{align}\label{Motivation for genral toeplitz}
    (T_{\varphi} (f)) (w) = \int_{\D} \left( \frac{\dou \left( \varphi f \right)}{\dou z}  \right) \overline{\left( \frac{\dou R_{w} (z)}{\dou z}\right)}\, dA(z), \qquad  w \in \D.
\end{align}
It is known that for any symbol $\varphi \in W^{1,2}(\mathbb D),$ the Toeplitz operator $T_{\varphi}$ defines a bounded operator on $\mathcal D_0$ if and only if $\Phi= P[\varphi|_{_\mathbb T}],$ the Poisson extension of $\varphi|_{_\mathbb T}$ on $\mathbb D,$ belongs $L^{\infty}(\mathbb D)$ and $\abs{\frac{\dou \Phi}{ \dou z}}^{2} dA$ is a \textit{Carleson measure} for $\mathcal D_0$ i.e. there exists a constant $C>0$ such that
\begin{align*}
 \int_{\D} \abs{f}^{2} \abs{\frac{\dou \Phi }{\dou z}} ^{2} dA(z)  \le C \norm{f}_{\mathcal D_0}^{2},\,\,\,f\in\mathcal D_0,
\end{align*}
see \cite[Theorem 2.2]{LuoXiao2018}. It is also known that for any symbol $\varphi \in W^{1,2}(\mathbb D),$ if the Toeplitz operator $T_{\varphi}$ defines a bounded operator on $\mathcal D_0,$ then it turns out that the operator $T_{\varphi}$ is determined by the boundary function $\varphi|_{_\mathbb T}$ and $T_{\varphi}= T_{\Phi},$ see \cite[Theorem 2.2]{LuoXiao2018}. This motivates us to consider the  symbol class $\mathcal T(\calD)$ defined by 
\begin{align*}
\mathcal T(\calD) := \Big\{\psi \in h^{\infty}(\mathbb D) : \abs{\frac{\dou \psi}{ \dou z}}^{2} dA \,\,\mbox{is a Carleson measure for\,\,} \mathcal D_0\Big\},
\end{align*}
where $h^{\infty}(\mathbb D)$ denotes the space of all complex valued bounded harmonic functions on $\mathbb D.$ We would like to point out that it is not the case that $\mathcal T(\calD) \subseteq W^{1,2} \left( \D \right)$, see Remark \ref{remark about symbol class} of this article.  Following \cite[Sec 2.2]{LuoXiao2018}, we use the weaker expression in \eqref{Motivation for genral toeplitz} to define a Toeplitz operator $T_{\psi}$ for $\psi \in \mathcal T(\calD).$  The Toeplitz operator $T_{\psi}$ with symbol $\psi$ in  $\mathcal T(\calD)$ is defined by
\begin{equation*}
    (T_{\psi} (f)) (w) := \int_{\D} \left( \frac{\dou \left( \psi f \right)}{\dou z}  \right) \overline{\left( \frac{\dou R_{w} (z)}{\dou z}\right)} \, dA(z), \qquad  w \in \D, f \in \mathcal D_0.
\end{equation*}
where $R_{w}(z)= \sum_{n=1}^{\infty} \frac{z^n\bar{w}^n}{n}$ is the reproducing kernel of $\mathcal D_0$ at the point $w \in \D$.  In this article we first show that $T_{\psi}$ defines a bounded operator on $\mathcal D_0$ for every $\psi\in \mathcal T(\calD).$ We find that the operator $T_{\psi}$ on the Dirichlet space $\mathcal D_0$ satisfies the Brown Halmos type identity $T_{\bar{z}}T_{\psi}T_z= T_{\psi}$ for every $\psi\in \mathcal T(\calD),$ see Corollary \ref{toeplitz condition satisfies}. The main goal of this article is to obtain a characterization the Toeplitz operators $\{T_{\psi} : \psi\in \mathcal T(\calD)\}$ through the Brown-Halmos type operator identity.  
\begin{theorem}\label{thm:BHT theorem for D}
 If $T\in \mathcal B(\mathcal D_0)$ satisfies the identity $T_{\bar{z}}TT_z= T$, then there exists a unique $\psi\in \mathcal T(\calD)$ such that $T=T_{\psi}.$ 
\end{theorem}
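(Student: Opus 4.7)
The plan proceeds in three stages: a matrix reduction forced by the Brown--Halmos identity, a Fej\'er-type smoothing at the operator level, and a limit passage producing the symbol. First I would compute directly from the defining integral formula that $T_z(z^n)=z^{n+1}$ for all $n\ge 1$, $T_{\bar z}(z^n)=z^{n-1}$ for $n\ge 2$, and $T_{\bar z}(z)=0$. Writing $T(z^n)=\sum_{k\ge 1}t_{k,n}z^k$, the hypothesis $T_{\bar z}TT_z=T$ becomes the coefficient identity $t_{k,n}=t_{k+1,n+1}$ for all $k,n\ge 1$. Hence $t_{k,n}=\gamma_{k-n}$ for a doubly-infinite sequence $(\gamma_j)_{j\in\Z}$, and matrix-entry bounds in the orthonormal basis $e_n:=z^n/\sqrt n$ give $|\gamma_j|\le\|T\|$. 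This lets me define the formal harmonic symbol $\psi(re^{i\theta}):=\sum_{j\in\Z}\gamma_j r^{|j|}e^{ij\theta}$ on $\D$, with analytic part $g(z)=\sum_{j\ge 0}\gamma_jz^j$ and co-analytic part $\bar f(z)$ where $f(z)=\sum_{j\ge 1}\overline{\gamma_{-j}}z^j$.

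Next, the rotations $U_\theta f(z):=f(e^{i\theta}z)$ are unitary on $\calD$ (a change-of-variables check using $\int|f'|^2\,dA$). Averaging $T$ against the Fej\'er kernel $K_N$,
\[
T_N := \frac{1}{2\pi}\int_0^{2\pi} K_N(\theta)\,U_\theta\,T\,U_\theta^{*}\,d\theta,
\]
a direct matrix-entry computation identifies $T_N=T_{\psi_N}$, where $\psi_N(z)=\sum_{|j|\le N}(1-|j|/(N+1))\gamma_j\chi_j(z)$, with $\chi_j(z)=z^j$ for $j\ge 0$ and $\chi_j(z)=\bar z^{|j|}$ otherwise, is the Fej\'er mean of $\psi$; each $\psi_N$ is a trigonometric polynomial, so $\psi_N\in\mathcal T(\calD)$ trivially. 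Positivity and normalization of $K_N$ yield the uniform operator bound $\|T_{\psi_N}\|\le\|T\|$, and dominated convergence using $T(z^n)\in\calD$ gives $T_{\psi_N}(z^n)\to T(z^n)$ in $\calD$ for every $n$, so $T_{\psi_N}\to T$ in the strong operator topology.

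The main obstacle is the third stage: upgrading the uniform operator bound on $T_{\psi_N}$ to the inclusion $\psi\in\mathcal T(\calD)$. For the $L^\infty$-bound, a direct computation of the Berezin numerator gives
\[
\frac{\langle T_{\psi_N}R_w,R_w\rangle}{\|R_w\|^2} = g_N(w)+\sum_{j=1}^{N}\Big(1-\frac{j}{N+1}\Big)\gamma_{-j}\,\bar w^j\,\frac{K_j(|w|^2)}{L(|w|^2)},
\]
with $L(t)=-\ln(1-t)$ and $K_j(t)=\sum_{m\ge 1}t^m/(m+j)$; since $K_j(t)/L(t)\to 1$ as $t\to 1^-$ for every fixed $j$, the Berezin symbol converges non-tangentially to $\psi_N(e^{i\theta})$, and the Berezin bound $|\widetilde{T_{\psi_N}}(w)|\le\|T\|$ forces $\|\psi_N\|_{L^\infty(\T)}\le\|T\|$; a Banach--Alaoglu argument in $L^\infty(\T)=(L^1(\T))^*$, matching Fourier coefficients in the limit, then gives $\|\psi\|_{L^\infty(\T)}\le\|T\|$. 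For the Carleson condition on $|g'|^2\,dA$, one uses the orthogonal splitting $T_{\psi_N}(z^n)=z^ng_N(z)+\sum_{l=1}^{n-1}(1-l/(N+1))\gamma_{-l}z^{n-l}$ in $\calD$ (whose summands live in orthogonal degree ranges), its extension to arbitrary polynomials, together with a parallel bound on the anti-analytic truncation $T_{\bar f_N}$ obtained from a symmetric manipulation of the Brown--Halmos identity, to extract the uniform multiplier estimate $\|M_{g_N}\|_{\calD\to\calD}\le C\|T\|$; by Stegenga's characterization of $\mathcal M(\calD)$ this yields the desired Carleson condition in the limit, and $f\in H^\infty_0$ follows by the analogous argument.

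Once $\psi\in\mathcal T(\calD)$ is in hand, $T_\psi$ has the same matrix entries $\gamma_{k-n}$ as $T$ in the basis $\{z^n\}$, and the SOT limits $T_{\psi_N}\to T$ and $T_{\psi_N}\to T_\psi$ together give $T=T_\psi$; uniqueness is immediate since the sequence $(\gamma_j)$ equals $\widehat{\psi|_{\T}}(j)$ and thus determines $\psi\in\mathcal T(\calD)$.
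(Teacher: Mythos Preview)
Your first two stages---the Toeplitz matrix reduction and the Fej\'er/Ces\`aro smoothing with the resulting SOT convergence and uniform bound $\|T_{\psi_N}\|\le\|T\|$---match the paper's Propositions~\ref{BHT implies toeplitz} and~\ref{lem:approx-by-hom-parts} almost exactly. Your $L^\infty$ bound via the Berezin symbol of $T_{\psi_N}$ on the reproducing kernels $R_w$ also works, though the paper obtains a cleaner formula by testing instead against $\tilde E_w$ with $E_w(z)=z/(1-\bar w z)$: since $E_w'=K_w^B$ is the Bergman kernel, one finds directly $\langle T\tilde E_w,\tilde E_w\rangle = w(1-|w|^2)\varphi_1'(w)+\varphi(w)$, and then the inclusion $\calD\subset\mathcal B_0$ (little Bloch) kills the first term as $|w|\to 1$, yielding $\|\varphi\|_\infty\le\|T\|$ in one stroke.

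The genuine gap is in your Carleson-condition step. The orthogonal splitting $T_{\psi_N}(z^n)=z^ng_N(z)+(\text{lower-degree terms})$ is valid for monomials because the two pieces occupy disjoint degree ranges, but for a general polynomial $f=\sum a_nz^n$ the pieces $g_Nf$ and $T_{\bar f_N}f$ overlap in degree and are \emph{not} orthogonal; so the monomial estimate $\|g_Nz^n\|_{\calD}\le\|T\|\,\|z^n\|_{\calD}$ does not by itself yield $\|M_{g_N}\|_{\calD\to\calD}\le C\|T\|$. Your alternative route---bounding $T_{\bar f_N}$ via a ``symmetric manipulation of the Brown--Halmos identity''---is not spelled out, and there is no evident uniform-in-$N$ bound on $\|T_{\bar f_N}\|$ available: the Riesz projection is unbounded on $L^\infty(\T)$, so $\|\psi_N\|_\infty\le\|T\|$ does not control $\|f_N\|_\infty$, and the crude estimate $\|T_{\bar f_N}\|\le\sum_{k\le N}|\gamma_{-k}|$ is not uniform.

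The paper sidesteps this difficulty entirely. After establishing $\psi\in h^\infty(\D)$, observe that $Tz=\sum_{j\ge1}\gamma_{j-1}z^j\in\calD$ gives $\sum_j j|\gamma_j|^2<\infty$, so $g'\in\calA$ and hence $\psi\in\tilde{\mathcal T}(\calD)$. Then the \emph{densely defined} operator $T_\psi$ has, by Proposition~\ref{Toeplitz matrix repn}, exactly the matrix entries $\gamma_{i-j}$, so it agrees with the bounded operator $T$ on $\mathrm{span}\{z^n\}$ and is therefore itself bounded. Now Theorem~\ref{thm:bdd-top-op} (the equivalence of boundedness of $T_\psi$ with the Carleson condition) delivers $\psi\in\mathcal T(\calD)$ for free; there is no need to estimate $\|M_{g_N}\|$ directly or to invoke Stegenga.
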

We would like to point out that although the Toeplitz operator $T_{\bar{z}}$ coincides with $T_z^*,$ the adjoint of $T_z,$ on the Hardy space $H^2(\mathbb D)$ but on the Dirichlet space $\mathcal D_0$, $T_{\bar{z}} \neq T_z^*.$ As a corollary to Theorem \ref{thm:BHT theorem for D}, we obtain that the subspace $\{T_{\psi}: \psi\in \mathcal T(\calD)\}$ is a closed subspace in the weak operator topology (W.O.T) of $\mathcal B(\mathcal D_0).$ This suggests that the symbol class $\mathcal T(\calD)\}$ is the largest class (in some sense) for the representation of the bounded Toeplitz operators on the Dirichlet space $\calD$. We further find that 
\begin{align*}
    \mathcal T(\calD)= \mathcal M(\calD) + \overline{H^{\infty}_0(\mathbb D)},
\end{align*}
where $\mathcal M(\calD)$ denotes the multiplier algebra of the Dirichlet space $\calD$ and $\overline{H^{\infty}_0(\mathbb D)}$ is given by $\overline{H^{\infty}_0(\D)}= \{ \bar{f}: f\in H^{\infty}(\D), f(0)=0\}$, see Proposition \ref{Toeplitz symbol decomposition}.

Investigating the compactness of the Toeplitz operators on function spaces has been an active area of research, for example see \cite{Lee2007,LuoXiao2018} for the characterizations of the compact Toeplitz operators $T_{\varphi}$ on the Dirichlet space $\calD$ for $\varphi$ coming from various symbol class. We find that the same characterization remains valid for the symbol class $\mathcal T(\calD),$ as expected. In particular we find that for any $\varphi\in\mathcal T(\calD),$ the operator  $T_{\varphi}$ is compact if and only if $\varphi=0,$ see Proposition \ref{Compact toeplitz}.

In this paper we also discuss several algebraic properties of the class of Toeplitz operators $\{T_{\varphi} : \varphi\in \mathcal T(\calD)\}$ on the Dirichlet space $\calD.$ The corresponding problem on the Toeplitz operators acting on the Hardy, weighted Bergman spaces or harmonic Bergman spaces has been well studied by several authors, see for example \cite{Axler91,BHT63,Choe99,Cuko94} and references therein. For a symbol $\varphi\in \mathcal T(\calD),$  the matrix representation of the Toeplitz  operator $T_{\varphi}$  with respect to the orthogonal basis $\{z^n: n\in \mathbb N\}$ of $\calD$ turns out to be the \textit{Toeplitz matrix} $(\!(\hat{\varphi} (i-j))\!)_{i,j=1}^{\infty},$ where the symbol $\varphi$ is given by
    \begin{equation*}
	\varphi(z) = \sum_{k\ge 0} \hat{\varphi} (k) z^{k} + \sum_{k>0} \hat{\varphi} (-k) \overline{z}^{k}, \qquad  z\in \D.
    \end{equation*}
Consequently, characterizations of several algebraic properties of the  Toeplitz operators $\{T_{\varphi} : \varphi\in \mathcal T(\calD)\}$ remains similar to that of the Toeplitz operators $\{T_{\varphi} :\varphi \in L^{\infty}(\mathbb T)\}$ on the Hardy space $H^2(\mathbb D).$  In \cite{Lee2007}, the author studied various algebraic properties  for the class of Toeplitz operators  $\{T_{\varphi} : \varphi\in \Omega \cap h(\D)\},$ where the symbol class $\Omega$ is given by $\Omega=\{ \varphi \in \mathscr C^1(\mathbb D):  \varphi, \frac{\dou \varphi }{\dou z}, \frac{\dou \varphi }{\dou \bar z} \in L^{\infty}(\mathbb D)\}$ and $h(\D)$ denotes the set of all complex valued harmonic functions on $\D$. It is straightforward to see that $\Omega \cap h(\D)\subseteq \mathcal T(\calD).$ We find that each of the characterizations of the algebraic properties for the symbol class $\Omega \cap h(\D)$ as mentioned in \cite{Lee2007}, remains valid for the larger symbol class $\mathcal T(\calD)$ as well, see Section \ref{Sec 4} for more detailed discussion.

The organization of the paper is as follows. In Section 2, we study the boundedness of the Toeplitz operators  $\{T_{\varphi} : \varphi\in \mathcal T(\calD)\}$ on the Dirichlet space $\calD$  and obtain the decomposition of $\mathcal T(\calD)$ into sum of $\mathcal M(\mathcal D_0)$ and $\overline{H^{\infty}_0(\mathbb D)}.$
In Section 3, we present the main result of this article, namely, we show that the operator identity $T_{\bar{z}}AT_z=A$ characterize the Toeplitz operators $\{T_{\varphi} : \varphi\in \mathcal T(\calD)\}$ in $\mathcal B(\calD).$ In Section 4, we discuss various characterizations of algebraic properties of these Toeplitz operators.


\section{Toeplitz Operators with symbols in $\mathcal T(\calD)$ }
Let $\mathcal A^2$ denotes the Bergman space of the open unit disc $\mathbb D$ and $K^B_w(z)=\frac{1}{(1-\bar{w}z)^2},$ denotes the reproducing kernel of $\calA$ at the point $w \in \D$ so that
\begin{align*}
\ip{g,K^B_w}_{\calA}=g(w),\,\,g\in\mathcal A^2,\,w\in\mathbb D.
\end{align*}
We use the symbol $P_{\calA}$ to denote the orthogonal projection from $L^2(\mathbb D)$ onto $\calA.$ Using the reproducing property, we have 
\begin{align*}
\left(P_{\calA}f\right)(w)= \ip{f,K^B_w}_{\calA},\qquad f\in L^2(\mathbb D), w\in\mathbb D.
\end{align*}
Let $R_{w}(z)= \sum_{n=1}^{\infty} \frac{z^n\bar{w}^n}{n} =\ln\frac{1}{(1-\bar{w}z)},$ denotes the reproducing kernel of the Dirichlet space $\mathcal D_0$ at the point $w \in \D$ so that
\begin{align*}
\ip{f,R_w}_{\mathcal D_0}=f(w),\,\,f\in\mathcal D_0,\,w\in\mathbb D.
\end{align*}
We would like to note down an important identity involving the reproducing kernel function $R_w$ and $K^B_w,$ as this will be useful for us at later stage.  
\begin{align}\label{Bergman and Dirichlet identity}
\frac{\dou}{\dou w}\overline{\left( \frac{\dou R_{w} (z)}{\dou z}\right)} = \frac{\dou}{\dou w}\left( \frac{w}{1-w\bar z} \right)
=\frac{1}{(1-w\bar{z})^2} = \overline{K^B_w(z)},\,\,w\in\mathbb D.
\end{align}   
Now we introduce a densely defined Toeplitz operator induced by a symbol in the class $\tilde{\mathcal T}(\calD)$ which is defined by 
$$\tilde{\mathcal T}(\calD):= \Big\{\varphi\in h^{\infty}(\mathbb D): \frac{\dou \varphi}{\dou z} \in \mathcal A^2\Big\},$$
where $h^{\infty}(\mathbb D)$ denotes the set of all bounded harmonic functions on unit disc $\mathbb D.$  For $\varphi \in \tilde{\mathcal T}(\calD)$, we consider the densely defined linear operator $T_{\varphi}$ on $\mathcal D_0$ given by 
\begin{equation*}
    (T_{\varphi} (f)) (w) := \int_{\D} \left( \frac{\dou \left( \varphi f \right)}{\dou z}  \right) \overline{\left( \frac{\dou R_{w} (z)}{\dou z}\right)} \, dA(z), \qquad   w \in \D, \,f\in \mbox{dom}(T_{\varphi}),
\end{equation*}
where dom$(T_{\varphi})=\{f\in \mathcal D_0: T_{\varphi}(f)\in\mathcal D_0\}.$ Suppose $\varphi\in \tilde{\mathcal T}(\calD).$ For any $w\in\mathbb D$ and $f\in $ span$\{z^n: n\geqslant 1\},$ note that
\begin{align*}
\left( T_{\varphi} (f) \right)' (w) &=  \frac{\dou}{\dou w} \left( \int_{\D} \left( \frac{\dou \varphi}{\dou z} f + f' \varphi \right) \left( \frac{w}{1-w\bar z} \right) dA(z)\right)  \\
&= \int_{\D} f(z) \frac{\dou \varphi}{\dou z} (z) \overline{K_{w}^{B} (z)} + \int_{\D} f'(z) \varphi (z) \overline{K_{w}^{B}(z)}\, dA(z) \\
&= f(w) \frac{\dou \varphi}{\dou z} (w) + P_{\calA}(\varphi f')(w). 
\end{align*}
Thus we have 
\begin{align}\label{action of densely defined toeplitz op}
\left( T_{\varphi} (f) \right)'= \frac{\dou \varphi}{\dou z} f + P_{\calA}(\varphi f'),\,\,\mbox{for every}\,\,f\in \,\mbox{span}\{z^n: n\geqslant 1\}.
\end{align}
This gives us that span$\{z^n: n\geqslant 1\} \subseteq \mbox{dom}(T_{\varphi})$ for any $\varphi\in \tilde{\mathcal T}(\calD).$ Thus $T_{\varphi}$ is densely defined operator on $\mathcal D_0$ for every $\varphi\in \tilde{\mathcal T}(\calD).$ Further note that for any $\varphi\in \tilde{\mathcal T}(\calD),$ we have
\begin{align}\label{lem:inner-product}
\ip{T_{\varphi} (f),g}_{\mathcal D_0}
&= \ip{\frac{\dou (\varphi f)}{\dou z}, g'}_{L^2(\D)},\qquad f,g\in \,\mbox{span}\{z^n: n\geqslant 1\}.
\end{align}

Now we provide a characterization for when $T_{\varphi}$ defines a bounded operator on $\mathcal D_0.$  This characterization is already known when $\varphi$ is in the symbol class $W^{1,2}(\mathbb D),$ see \cite[Theorem 2.2]{LuoXiao2018}.
\begin{theorem}\label{thm:bdd-top-op}
Let $\varphi \in \tilde{\mathcal T}(\calD).$ Then the following are equivalent: 
    \begin{itemize}
	\item[(a)] $T_{\varphi}$ extends to a bounded operator on $\mathcal D_0$.
	\item[(b)] The measure $\abs{\frac{\dou \Phi}{ \dou z}}^{2} dA$ is a Carleson measure for $\mathcal D_0,$ that is, there exists a constant $C>0$ such that
\begin{align*}
 \int_{\D} \abs{f}^{2} \abs{\frac{\dou \Phi }{\dou z}} ^{2} dA  \le C \norm{f}_{\mathcal D_0}^{2},\,\,\,f\in\mathcal D_0.
\end{align*}
    \end{itemize}
 \end{theorem}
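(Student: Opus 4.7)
The plan is to read the boundedness condition off the decomposition formula \eqref{action of densely defined toeplitz op}, namely
\begin{equation*}
(T_{\varphi} f)' \;=\; f\,\frac{\dou \varphi}{\dou z} \;+\; P_{\calA}(\varphi f'), \qquad f \in \mathrm{span}\{z^n : n \ge 1\},
\end{equation*}
and then split both directions via the triangle / reverse triangle inequality. A preliminary observation is that for any bounded harmonic $\varphi$ on $\D$, its boundary values recover $\varphi$ by Poisson integration, so $\Phi = P[\varphi|_{_\T}] = \varphi$, which allows us to identify the Carleson condition in (b) with the same condition for $|\dou \varphi/\dou z|^{2}\,dA$. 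Another point to note is that for $g \in \calD$ one has $\|g\|_{\calD}^{2} = \|g'\|_{L^2(\D)}^{2}$ (since $\int_{\D} z^n\, dA = 0$ for $n \ge 1$, the $W^{1,2}$ inner product reduces to the Dirichlet integral on $\calD$).

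For the direction (b) $\Rightarrow$ (a), I would fix a polynomial $f \in \calD$ and apply the $L^2(\D)$ triangle inequality to the displayed identity. The first summand is bounded by $\sqrt{C}\,\|f\|_{\calD}$ courtesy of the Carleson hypothesis, while the second summand satisfies
\begin{equation*}
\|P_{\calA}(\varphi f')\|_{L^2(\D)} \;\le\; \|\varphi f'\|_{L^2(\D)} \;\le\; \|\varphi\|_{\infty}\,\|f\|_{\calD},
\end{equation*}
since $P_{\calA}$ is a contraction on $L^2(\D)$. Combining these gives $\|T_{\varphi} f\|_{\calD} \le (\sqrt{C} + \|\varphi\|_{\infty})\|f\|_{\calD}$ on the dense subspace of polynomials vanishing at $0$, and extension by density yields boundedness of $T_{\varphi}$ on $\calD$.

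For (a) $\Rightarrow$ (b), I would run the same decomposition backwards: by the reverse triangle inequality,
\begin{equation*}
\Big\| f\,\tfrac{\dou \varphi}{\dou z}\Big\|_{L^2(\D)} \;\le\; \|(T_{\varphi} f)'\|_{L^2(\D)} + \|P_{\calA}(\varphi f')\|_{L^2(\D)} \;\le\; \bigl(\|T_{\varphi}\| + \|\varphi\|_{\infty}\bigr)\|f\|_{\calD},
\end{equation*}
so that $\int_{\D} |f|^2 |\dou \varphi/\dou z|^2\, dA \le (\|T_{\varphi}\| + \|\varphi\|_{\infty})^{2}\|f\|_{\calD}^{2}$ for every polynomial $f$ with $f(0)=0$. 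To promote this to all of $\calD$, I would approximate $f \in \calD$ by its Taylor polynomials $f_n$, which converge to $f$ both in the $\calD$-norm and uniformly on compact subsets of $\D$ (by boundedness of point evaluations). Fatou's lemma applied to $|f_n|^{2}|\dou \varphi/\dou z|^{2}$ then transfers the Carleson bound to $f$.

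The argument is largely routine once the decomposition formula is in hand; the only subtlety I expect is in the (a) $\Rightarrow$ (b) step, where one must be mindful that $f \in \calD$ need not be bounded and that polynomials are dense only in the Dirichlet norm, not in $L^{2}(|\dou\varphi/\dou z|^{2}\,dA)$ a priori. This is handled by the uniform-on-compacts / Fatou argument above, which does not require any absolute integrability input beyond what the Carleson inequality itself is supposed to supply on polynomials.
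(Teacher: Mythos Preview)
Your proposal is correct and follows essentially the same approach as the paper: both directions are obtained by applying the (reverse) triangle inequality to the decomposition \eqref{action of densely defined toeplitz op}, bounding the projection term by $\|\varphi\|_\infty\|f\|_{\calD}$, and passing from polynomials to all of $\calD$ via pointwise convergence and Fatou's lemma. The only cosmetic differences are that the paper uses the squared form $|a+b|^2 \le 2|a|^2 + 2|b|^2$ rather than the triangle inequality, and in the (b)$\Rightarrow$(a) direction it also invokes Fatou (to show the original integral definition of $T_\varphi$ is bounded on every $f\in\calD$, not merely that its restriction to polynomials extends boundedly).
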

\begin{proof}
First assume that $T_{\varphi}$ extends to a bounded operator on $\mathcal D_0$. In view of the equation \eqref{action of densely defined toeplitz op} it follows that for any $f\in \mbox{span}\{z^n:n\geqslant 1\},$
\begin{align*}
	\int_{\D} \abs{\frac{\dou \varphi}{\dou z} f}^{2} dA &=\int_{\D} \abs{\frac{\dou \varphi}{\dou z} f  + P_{\calA}(\varphi f')  - P_{\calA}(\varphi f') }^{2} dA(z) \\
	&\le 2\int_{\D} \abs{\frac{\dou \varphi}{\dou z} f  + P_{\calA}(\varphi f')}^{2} dA +2 \int_{\D}  \abs {P_{\calA}(\varphi f') }^{2} dA(z) \\
	&=  2\norm{(T_{\varphi}f)'}_{\mathcal A^2}^{2} + 2 \norm{P_{\calA}(\varphi f')}_{\mathcal A^2}^{2}\\
	& \le 2\norm{T_{\varphi}f}_{\mathcal D_0}^{2} + 2 \norm{\varphi}_{\infty}^{2} \norm{f'}_{\mathcal A^2}^{2} \\
	&\le 2\left( \norm{T_{\varphi}}^{2} + \norm{\varphi}_{\infty} ^{2} \right) \norm{f}_{\mathcal D_0}^{2}.
    \end{align*}
Note that the point evaluation at every point $w\in\mathbb D,$ is a continuous linear functional on $\mathcal D_0$ and span$\{z^n: n\geqslant 1\}$ is a dense subset in $\mathcal D_0.$ So, an application of Fatou's lemma will give us 
\begin{align*}
\int_{\D} \abs{f}^{2} \abs{\frac{\dou \varphi }{\dou z}} ^{2} dA  \le 2\left( \norm{T_{\varphi}}^{2} + \norm{\varphi}_{\infty} ^{2} \right) \norm{f}_{\mathcal D_0}^{2},\,\,\,f\in\mathcal D_0.
\end{align*}
Now assume that there exists a $C>0$ such that 
\begin{align*}
 \int_{\D} \abs{f}^{2} \abs{\frac{\dou \varphi }{\dou z}} ^{2} dA  \le C \norm{f}_{\mathcal D_0}^{2},\,\,\,f\in\mathcal D_0.
\end{align*}
Note that for every $f\in $ span$\{z^n: n\geqslant 1\},$ we have
\begin{align*}
	\norm{T_{\varphi} f}_{\mathcal D_0}^{2} &= \int_{\D} \abs{(T_{\varphi} f)' (w)}^{2} dA(w) \\
	&= \int_{\D} \abs{f(w) \frac{\dou \varphi}{\dou z} (w) + P_{\calA}(\varphi f')(w)}^{2} dA(w) \\
	&\le 2 \int_{\D} \Big(\abs{f(w) \frac{\dou\varphi}{\dou z} (w)}^{2} + \abs{P_{\calA}(\varphi f')(w) }^{2}\Big) dA(w) \\
	&\le 2C \norm{f}_{\mathcal D_0}^{2} + 2\norm{P_{\calA}(\varphi f')}_{\calA}^2\\
	&\le 2C \norm{f}_{\mathcal D_0}^{2} + 2\norm{\varphi}_{\infty}^2 \norm{f'}_{\mathcal A^2}^{2}\\
	& \le 2\left( C + \norm{\varphi}_{\infty}^2 \right) \norm{f}_{\mathcal D_0}^{2} \stepcounter{equation}\tag{\theequation}\label{eqn:operator-bound}\\
    \end{align*}
  It is straightforward to see that for any $w\in\calD$ the map $f\mapsto f'(w)$ is also bounded linear functional on $\mathcal D_0.$  Again using density of span$\{z^n: n\geqslant 1\}$  in $\mathcal D_0$ and an application of Fatou's lemma will give us
    \begin{align*}
    \norm{T_{\varphi} f}_{\mathcal D_0}^{2} \leqslant  2\left( C + \norm{\varphi}_{\infty}^2 \right) \norm{f}_{\mathcal D_0}^{2},\,\,\,f\in \mathcal D_0,
    \end{align*}
 which shows that $T_{\varphi}$ defines a bounded operator on $\mathcal D_0.$ This completes the proof.   
    \end{proof} 
 Recall that the symbol class $\calK$ is defined as
 \begin{align*}
 \calK  = \Big\{\varphi \in h^{\infty}(\mathbb D) : \abs{\frac{\dou \varphi}{ \dou z}}^{2} dA \,\,\mbox{is a Carleson measure for\,\,} \mathcal D_0\Big\}.
 \end{align*}
 Note that for any $\varphi \in\calK,$ we have that $z\frac{\dou \varphi}{\dou z} \in \calA.$ It follows that $\frac{\dou \varphi}{\dou z} \in \calA$ for every $\varphi \in\calK.$ Thus in view of Theorem \ref{thm:bdd-top-op} the following corollary is immediate.
 \begin{corollary}\label{bounded vs k tilde}
 $\calK= \{\varphi \in \tilde{\mathcal T}(\calD): T_{\varphi}$ defines a bounded operator on $\mathcal D_0\}.$
 \end{corollary}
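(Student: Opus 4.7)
The plan is to treat this as a short deduction from Theorem~\ref{thm:bdd-top-op}. That theorem characterizes boundedness of $T_{\varphi}$ under the standing assumption $\varphi \in \tilde{\mathcal T}(\calD)$, so to upgrade it to a statement about all $\varphi \in \calK$ the only real work is to verify the inclusion $\calK \subseteq \tilde{\mathcal T}(\calD)$, i.e.\ that for every bounded harmonic $\varphi$ with $|\tfrac{\dou \varphi}{\dou z}|^2 dA$ Carleson for $\calD$, one already has $\tfrac{\dou \varphi}{\dou z} \in \calA$. Once this is in place, note also that for bounded harmonic $\varphi$ the Poisson extension $\Phi$ of the boundary values coincides with $\varphi$, so condition (b) of Theorem~\ref{thm:bdd-top-op} reads precisely as membership in $\calK$.

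The key step is therefore the following observation. Since $\varphi$ is harmonic on $\D$, the identity $\Delta = 4\tfrac{\dou}{\dou \bar z}\tfrac{\dou}{\dou z}$ forces $\tfrac{\dou}{\dou \bar z}\bigl(\tfrac{\dou \varphi}{\dou z}\bigr) = 0$, so $\tfrac{\dou \varphi}{\dou z}$ is holomorphic on $\D$. I would then test the Carleson inequality against the specific function $f(z) = z \in \calD$, which has finite Dirichlet norm, to obtain
\[
\int_{\D} |z|^{2}\Big|\tfrac{\dou \varphi}{\dou z}\Big|^{2} dA \;\le\; C\,\|z\|_{\calD}^{2} \;<\; \infty,
\]
so that $z\,\tfrac{\dou \varphi}{\dou z}$ lies in $\calA$. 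Writing the Taylor expansion $\tfrac{\dou \varphi}{\dou z} = \sum_{n\ge 0} a_{n} z^{n}$ and using $\|z^{n}\|_{\calA}^{2} = \tfrac{1}{n+1}$, the two norms are comparable in the obvious way,
\[
\Big\|\tfrac{\dou \varphi}{\dou z}\Big\|_{\calA}^{2} = \sum_{n\ge 0}\frac{|a_{n}|^{2}}{n+1} \;\le\; 2\sum_{n\ge 0}\frac{|a_{n}|^{2}}{n+2} = 2\,\Big\|z\,\tfrac{\dou \varphi}{\dou z}\Big\|_{\calA}^{2} \;<\;\infty,
\]
which promotes $z\,\tfrac{\dou \varphi}{\dou z} \in \calA$ to $\tfrac{\dou \varphi}{\dou z} \in \calA$, establishing $\varphi \in \tilde{\mathcal T}(\calD)$.

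With the inclusion $\calK \subseteq \tilde{\mathcal T}(\calD)$ secured, both directions of the corollary follow immediately from Theorem~\ref{thm:bdd-top-op}. For $\varphi \in \calK$, the implication (b)$\Rightarrow$(a) — applied with $\Phi=\varphi$ — yields a bounded $T_{\varphi}$ on $\calD$. Conversely, if $\varphi \in \tilde{\mathcal T}(\calD)$ and $T_{\varphi}$ is bounded, then (a)$\Rightarrow$(b) gives the Carleson condition on $|\tfrac{\dou \varphi}{\dou z}|^{2} dA$, and combined with $\varphi \in h^{\infty}(\D)$ this puts $\varphi$ in $\calK$.

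There is no serious obstacle here; the only point that requires any care is the passage from the weighted Carleson estimate to unweighted $\calA$-membership of $\tfrac{\dou \varphi}{\dou z}$, which is handled by plugging in the single test function $z$ and using the elementary coefficient comparison above.
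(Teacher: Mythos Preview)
Your proof is correct and follows essentially the same route as the paper: the paper also establishes $\calK \subseteq \tilde{\mathcal T}(\calD)$ by testing the Carleson inequality on $f(z)=z$ to obtain $z\,\tfrac{\dou \varphi}{\dou z}\in\calA$ and then passing to $\tfrac{\dou \varphi}{\dou z}\in\calA$, after which Theorem~\ref{thm:bdd-top-op} does the rest. Your write-up is more explicit than the paper's, which simply asserts ``it follows that $\tfrac{\dou \varphi}{\dou z}\in\calA$'' without spelling out the coefficient comparison you provide.
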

 \begin{remark}\label{remark about k and k tilde}
It turns out that $\calK \subsetneq \tilde{\mathcal T}(\calD).$ It is well known that $\mathcal M(\mathcal D_0)\subsetneq \left(\mathcal D_0 \cap H^{\infty}(\D)\right).$ Take any $g\in \left(\mathcal D_0 \cap H^{\infty}(\D)\right) \setminus \mathcal M(\mathcal D_0).$ Then $g\in \tilde{\mathcal T}(\calD) \setminus \calK.$
\end{remark}
 
\begin{remark}\label{remark about symbol class}
If $\varphi$ is a harmonic function in  $W^{1,2} \left( \D \right)$ such that $T_{\varphi}$ defines a bounded operator on $\mathcal D_0,$ then $\varphi\in \calK,$ see \cite[Theorem 2.2]{LuoXiao2018}. Nevertheless it is not the case that $\calK \subseteq W^{1,2} \left( \D \right)$. To see this, take any $f\in H^{\infty} \left( \D \right)$ such that $\norm{\frac{\dou f}{\dou z}}_{L^{2} \left( \D \right)} = \infty$ (for instance, one can take $f$ to be some infinite Blaschke product) and consider $\Psi = \overline{f}$. Then $\Psi \in \calK$ because $\abs{\frac{\dou \Psi}{\dou z}}^{2} dA(z)$ is the zero measure and $\Psi$ is a bounded harmonic function. But it is clear that $\Psi \not\in W^{1,2} \left( \D \right)$ because $\norm{\frac{\dou \Psi}{\dou \bar z}}_{L^{2} \left( \D \right)} = \infty.$ 
\end{remark}

\begin{proposition}\label{Toeplitz matrix repn}
    Let $\varphi$ be a function in $\tilde{\mathcal T}(\calD)$ and suppose that
    \begin{equation*}
	\varphi(z) = \sum_{k\ge 0} \hat{\varphi} (k) z^{k} + \sum_{k>0} \hat{\varphi} (-k) \overline{z}^{k}, \qquad  z\in \D.
    \end{equation*}
Then we have that
\begin{equation*}
    T_{\varphi} (z^{n})= \sum_{k \ge 1} \hat{\varphi} \left( k-n \right) z^{k}, \qquad n \ge 1 .
\end{equation*} Consequently, it follows that the matrix representation of $T_{\varphi}$ with respect to the orthogonal basis $\left\{ z^{j} : j \in \mathbb Z_{\geqslant 1} \right\}$ of $\mathcal D_0$ is of the following {\textit{Toeplitz}} form:
    \begin{equation*}\label{thm:action-of-toeplitz-operator}
  \left[ T_{\varphi} \right]=	\begin{pmatrix}
	    \hat{\varphi} \left( 0 \right) & \hat{\varphi} \left( -1 \right) & \hat{\varphi} \left( -2 \right) & \cdots \\
\hat{\varphi} \left( 1 \right) & \hat{\varphi} \left( 0 \right) & \hat{\varphi} \left( -1 \right) & \cdots \\
\hat{\varphi} \left( 2 \right) & \hat{\varphi} \left( 1 \right) & \hat{\varphi} \left( 0 \right) & \ddots \\
\vdots & \vdots & \ddots & \ddots
	\end{pmatrix}.
    \end{equation*}
\end{proposition}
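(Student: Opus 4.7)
The plan is to compute $T_{\varphi}(z^n)$ directly using the formula \eqref{action of densely defined toeplitz op}, namely
\[
(T_{\varphi}(z^n))' = \frac{\dou \varphi}{\dou z}\, z^n + n\, P_{\calA}(\varphi z^{n-1}),
\]
then extract the coefficients of $z^m$ in $T_{\varphi}(z^n)$, and finally read off the matrix entries. Since $\varphi \in \tilde{\mathcal T}(\calD) \subseteq h^\infty(\D)$, its Fourier expansion $\varphi(z) = \sum_{k\geqslant 0} \hat{\varphi}(k) z^k + \sum_{k>0} \hat{\varphi}(-k) \bar z^k$ converges in $L^2(\D)$, and since the Bergman projection $P_{\calA}$ is continuous on $L^2(\D)$, I can apply $P_{\calA}$ term-by-term to $\varphi z^{n-1}$.

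First I would compute $\frac{\dou \varphi}{\dou z} z^n = \sum_{k\geqslant 1} k\,\hat{\varphi}(k)\, z^{k+n-1}$. Next I would compute $P_{\calA}(\varphi z^{n-1})$ piece by piece: the analytic terms $\hat{\varphi}(k) z^{k+n-1}$ are already holomorphic and survive the projection, while for the anti-analytic terms I would use the easily verified identity
\[
P_{\calA}(\bar z^k z^{n-1}) = \begin{cases} \dfrac{n-k}{n}\, z^{n-1-k}, & 1 \leqslant k \leqslant n-1, \\[4pt] 0, & k \geqslant n, \end{cases}
\]
obtained from $\langle \bar z^k z^{n-1}, z^{m}\rangle_{L^2(\D)} = \frac{\delta_{n-1,k+m}}{n}$ together with $\|z^{m}\|_{\calA}^{2} = \frac{1}{m+1}$. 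Summing the three contributions and re-indexing (letting $m = k+n$ in the analytic sums and $m = n-k$ in the anti-analytic sum) gives
\[
(T_{\varphi}(z^n))' = \sum_{m \geqslant 1} m\, \hat{\varphi}(m-n)\, z^{m-1}.
\]
Since $T_{\varphi}(z^n) \in \mathcal D_0$ vanishes at $0$, integrating term-by-term yields $T_{\varphi}(z^n) = \sum_{m \geqslant 1} \hat{\varphi}(m-n)\, z^{m}$.

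Finally, the matrix representation is immediate: the coefficient of $z^m$ in the expansion of $T_{\varphi}(z^n)$ with respect to the orthogonal basis $\{z^j : j \geqslant 1\}$ is $\hat{\varphi}(m-n)$, which only depends on $m-n$ and therefore gives a Toeplitz matrix.

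The main technical point to be careful about is the justification for applying $P_{\calA}$ term-by-term to the Fourier series of $\varphi z^{n-1}$. I expect no real difficulty here because $\varphi \in L^\infty(\D) \subseteq L^2(\D)$ (so the truncated Fourier sums converge to $\varphi$ in $L^2(\D)$) and $P_{\calA}$ is a bounded projection on $L^2(\D)$; an alternative justification, which sidesteps any convergence issue altogether, is to first check the claim for trigonometric polynomial symbols by direct computation and then extend by $L^2$-continuity of both sides of \eqref{action of densely defined toeplitz op} when pairing with each fixed basis element $z^m$ via the inner product identity \eqref{lem:inner-product}.
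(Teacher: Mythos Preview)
Your proof is correct. The computation of $P_{\calA}(\bar z^k z^{n-1})$ is right, the re-indexing is clean, and the justification for term-by-term application of $P_{\calA}$ (via $L^2$-convergence of the harmonic expansion of $\varphi$) is sound. Your implicit use of $T_{\varphi}(z^n)\in\calD$ is justified because the paper has already established $\mathrm{span}\{z^n:n\ge 1\}\subseteq\mathrm{dom}(T_\varphi)$ prior to \eqref{action of densely defined toeplitz op}.

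The paper's own argument is slightly different in packaging: rather than passing through \eqref{action of densely defined toeplitz op} and the Bergman projection, it computes $(T_\varphi(z^n))(w)$ directly from the defining integral by expanding both $\frac{\partial}{\partial z}(\varphi z^n)$ and $\overline{\partial_z R_w} = \sum_{j\ge 0}\bar w^{j+1} z^j$ and pairing in $L^2(\D)$. The inner products $\langle z^{n+k-1},z^j\rangle_{L^2(\D)}$ and $\langle \bar z^k z^{n-1},z^j\rangle_{L^2(\D)}$ that arise are exactly the ones you compute, so the two routes are doing the same arithmetic. The paper then separately verifies $\sum_{j\ge 1}\hat\varphi(j-n)z^j\in\calD$ via the convergence criterion $\sum k|\hat\varphi(k)|^2<\infty$ coming from $\partial_z\varphi\in\calA$; in your approach this step is absorbed into the a priori fact that $z^n\in\mathrm{dom}(T_\varphi)$. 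Your route is marginally more structured (it leverages the Bergman-projection formula already derived), while the paper's is marginally more self-contained (straight from the definition). Neither offers a real advantage over the other.
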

\begin{proof}
First note that as $\frac{\dou \varphi}{ \dou z}\in\calA,$ we have that 
    \begin{align}\label{Convergence criterion}
    \sum\limits_{k=1}^{\infty} k |\hat{\varphi} (k)|^2 <\infty.
    \end{align}
For any $z\in \D$ and $n \geqslant 1$, we get 
    \begin{equation*}
	\varphi(z)z^{n} = \sum_{k\geqslant 0} \hat{\varphi} (k) z^{n+k} + \sum_{k>0} \hat{\varphi} (-k) \overline{z}^{k}z^{n}.
    \end{equation*}
   
     and consequently, we have 
     \begin{equation*}
	 \frac{\dou}{\dou z}\left( \varphi(z)z^{n} \right) = \sum_{k\ge 0} (n+k) \hat{\varphi} (k) z^{n+k-1} + \sum_{k>0} n \hat{\varphi} (-k) \overline{z}^{k}z^{n-1}.
     \end{equation*}

Now note that for each $w\in\mathbb D,$
     \begin{align*}
	 \ip{ \sum_{k\ge 0} (n+k) \hat{\varphi} (k) z^{n+k-1}, \sum_{j\ge 0} \overline{w}^{j+1} z^{j} }_{L^{2} \left( \D \right)} &= \sum_{j,k \ge 0} \left( n+k \right) \hat{\varphi} (k) w^{j+1} \ip{z^{n+k-1}, z^{j}}_{L^{2} \left( \D \right)} \\
	 &= \sum_{j,k \ge 0} \hat{\varphi} \left( k \right) \delta_{n+k-1, j} w^{j+1},
	      \end{align*}
      and we have
      \begin{align*}
	  \ip{\sum_{k>0} n \hat{\varphi} (-k) \overline{z}^{k}z^{n-1}, \sum_{j\ge 0} \overline{w}^{j+1} z^{j}}_{L^{2} \left( \D \right)} &= \sum_{j\ge 0 , k > 0} n \hat{\varphi} \left( -k \right) w^{j+1} \ip{\overline{z}^{k} z^{n-1} , z^{j}}_{L^{2} \left( \D \right)} \\
	  &= \sum_{j\ge 0, k> 0} \hat{\varphi} \left( -k \right) \delta_{n-1, j+k} w^{j+1}
      \end{align*}
Hence, we find that for each $w\in\mathbb D,$
      \begin{align*}
	  (T_{\varphi} (z^{n}))(w) &= \ip{ \frac{\dou}{\dou z}\left( \varphi(z)z^{n} \right), \sum_{j \ge 0} \overline{w}^{j+1}z^{j} }_{L^{2} \left( \D \right)} 
	  = \sum_{j\ge 1} \hat{\varphi} \left( j-n \right) w^{j}.
      \end{align*}
In view of \eqref{Convergence criterion}, we have that $\sum_{j\geqslant 1} j|\hat{\varphi}(j-n)|^2.$ Thus $\sum_{j\geqslant 1} \hat{\varphi} \left( j-n \right) z^{j}\in \mathcal D_0.$ Since the evaluation at every point $w\in\mathbb D$ is a bounded linear functional on $\mathcal D_0,$ it follows that 
      \begin{align*}
       T_{\varphi} (z^{n})=\sum_{j\ge 1} \hat{\varphi} \left( j-n \right) z^{j},
      \end{align*}
where the above sum converges in the norm in $\calD.$ This gives us the matrix representation of $T_{\varphi}$ is in the required form. 
\end{proof}
In view of the Proposition \ref{Toeplitz matrix repn}, the following corollary is now immediate. 
\begin{corollary}
For any $\varphi\in\tilde{\mathcal T}(\calD)$ the operator $T_{\varphi}=0$ if and only if $\varphi =0.$
\end{corollary}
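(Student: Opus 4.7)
The plan is to read the corollary directly off the matrix representation of $T_{\varphi}$ established in Proposition \ref{Toeplitz matrix repn}. The backward direction ($\varphi = 0$ implies $T_{\varphi}=0$) is immediate from the defining integral for $T_{\varphi}$, so the content lies in the forward direction. Assuming $T_{\varphi} = 0$, I would evaluate $T_{\varphi}$ on the orthogonal basis $\{z^n : n \geqslant 1\}$ of $\mathcal D_0$ and compare with the conclusion $T_{\varphi}(z^n) = \sum_{j \geqslant 1} \hat{\varphi}(j-n) z^j$ of Proposition \ref{Toeplitz matrix repn}.

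Since $\{z^j : j \geqslant 1\}$ is an orthogonal (in particular linearly independent) family in $\mathcal D_0$ and the series converges in norm, the equality $\sum_{j \geqslant 1} \hat{\varphi}(j-n)z^j = 0$ forces $\hat{\varphi}(j-n) = 0$ for every $j \geqslant 1$ and every $n \geqslant 1$. As the integer $m = j - n$ ranges over all of $\mathbb Z$ as $j,n$ vary over $\mathbb Z_{\geqslant 1}$, this yields $\hat{\varphi}(m) = 0$ for every $m \in \mathbb Z$.

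Finally, I would plug this back into the harmonic expansion
\begin{equation*}
\varphi(z) = \sum_{k \geqslant 0} \hat{\varphi}(k) z^k + \sum_{k > 0} \hat{\varphi}(-k) \bar z^k,
\end{equation*}
which is assumed as the representation of $\varphi \in \tilde{\mathcal T}(\calD) \subseteq h^{\infty}(\mathbb D)$, to conclude $\varphi \equiv 0$. No genuine obstacle arises: the only point to be careful about is that the identification of the Fourier/Taylor coefficients of a bounded harmonic function on $\mathbb D$ with the numbers $\hat{\varphi}(k)$ is unique, so that vanishing of all $\hat{\varphi}(m)$ really does force $\varphi = 0$; this is standard and needs no further work.
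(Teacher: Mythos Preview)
Your proposal is correct and is exactly the approach the paper intends: the authors simply state that the corollary is immediate from Proposition \ref{Toeplitz matrix repn}, and what you wrote is the natural unpacking of that remark.
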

Proposition \ref{Toeplitz matrix repn} also gives us the answer of zero product problem for the class of Toeplitz operators $\{T_{\varphi}: \varphi\in \mathcal T (\calD)\}$  acting on the Dirichlet space $\calD$. The classical zero product problem asks when a product of finitely many bounded Toeplitz operators on the Hardy space is equal to zero. This has been resolved completely by Aleman and Vukoti\'{c}, see \cite{AVZP2009}. Using their result,  Y. J. Lee proved that the zero product problem for the Toeplitz operators on $\calD$ has the same answer provided the the symbols associated to the Toeplitz operators are coming from the class $\{ \varphi \in h(\mathbb D):\varphi,\frac{\dou \varphi }{\dou z}, \frac{\dou \varphi }{\dou \bar z} \in L^{\infty}(\mathbb D) \},$  see \cite[Theorem 1.2]{Lee2009}. In view of Proposition \ref{Toeplitz matrix repn}, Lee's argument can be imitated to obtain the same result for the relatively larger class of Toeplitz operators on $\calD$ induced by the symbol class $\mathcal T(\calD).$ Therefore we omit the proof of the following Proposition and refer the reader to \cite[Theorem 1.2]{Lee2009} for the proof. 
\begin{proposition}
Let $\varphi_1,\ldots,\varphi_k\in \mathcal T(\calD).$  Then $T_{\varphi_1}\cdots T_{\varphi_k}=0$ on $\calD$ if and only if $\varphi_j =0$ for some $j=1,\ldots,k.$
\end{proposition}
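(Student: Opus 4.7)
The plan is to reduce the assertion to the zero product theorem of Aleman and Vukoti\'c \cite{AVZP2009} for Toeplitz operators on the Hardy space, mirroring Lee's strategy in \cite[Theorem 1.2]{Lee2009}. Since each $\varphi_j \in \mathcal T(\calD) \subseteq h^{\infty}(\mathbb D)$, its radial boundary value $\varphi_j|_{\mathbb T}$ lies in $L^{\infty}(\mathbb T)$ and hence induces a bounded Hardy-space Toeplitz operator $T^{H^2}_{\varphi_j}$ on $H^2(\mathbb D)$ whose matrix with respect to $\{z^n\}_{n\geqslant 0}$ is $(\hat\varphi_j(i-l))_{i,l\geqslant 0}$, sharing exactly the Fourier data that appears in Proposition \ref{Toeplitz matrix repn}.

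To transfer the hypothesis on $\mathcal D_0$ to the Hardy-space side, introduce the unitary $U:\mathcal D_0\to H^2(\mathbb D)$ determined by $U(z^n/\sqrt{n}) = z^{n-1}$ for $n\geqslant 1$. Using Proposition \ref{Toeplitz matrix repn} together with $\norm{z^n}_{\mathcal D_0}^{2}=n$, a direct computation yields
\begin{equation*}
    \bigl\langle U T_{\varphi}U^{*}\, z^{m},\, z^{j}\bigr\rangle_{H^2} \;=\; \sqrt{\tfrac{j+1}{m+1}}\,\hat\varphi(j-m), \qquad  j, m \geqslant 0,
\end{equation*}
for every $\varphi \in \mathcal T(\calD)$. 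Composing $k$ such operators, the intermediate factors $\sqrt{l_i+1}$ cancel in successive pairs, giving the telescoping identity
\begin{equation*}
    \bigl\langle U T_{\varphi_1}\cdots T_{\varphi_k} U^{*}\, z^{m},\, z^{j}\bigr\rangle_{H^2} \;=\; \sqrt{\tfrac{j+1}{m+1}}\,\bigl\langle T^{H^2}_{\varphi_1}\cdots T^{H^2}_{\varphi_k}\, z^{m},\, z^{j}\bigr\rangle_{H^2},
\end{equation*}
for all $j, m\geqslant 0$.

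Assuming $T_{\varphi_1}\cdots T_{\varphi_k}=0$ on $\mathcal D_0$, the left hand side vanishes identically, and because the weight $\sqrt{(j+1)/(m+1)}$ is strictly positive, every matrix entry of $T^{H^2}_{\varphi_1}\cdots T^{H^2}_{\varphi_k}$ must vanish, forcing $T^{H^2}_{\varphi_1}\cdots T^{H^2}_{\varphi_k}=0$ on $H^2(\mathbb D)$. The Aleman--Vukoti\'c theorem then produces some $j$ with $\varphi_j|_{\mathbb T}=0$, and since $\varphi_j$ is the Poisson extension of its boundary values we conclude $\varphi_j \equiv 0$ on $\mathbb D$; the reverse implication is trivial. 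The only genuine technical point is the telescoping identity, which is a direct matrix calculation relying solely on the Toeplitz form furnished by Proposition \ref{Toeplitz matrix repn}; this is why Lee's original argument, written for the narrower class $\Omega \cap h(\mathbb D)$, transplants without modification to the broader symbol class $\mathcal T(\calD)$.
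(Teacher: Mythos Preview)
Your proof is correct and follows exactly the route the paper indicates: the paper omits the proof and states that, in view of Proposition \ref{Toeplitz matrix repn}, Lee's argument from \cite[Theorem 1.2]{Lee2009} carries over verbatim to the larger symbol class $\mathcal T(\calD)$, and your unitary transfer $U$ together with the telescoping identity is precisely that argument made explicit. The only input specific to $\mathcal T(\calD)$ is the Toeplitz matrix form and the fact that each $\varphi_j\in h^\infty(\mathbb D)$ so that $\varphi_j|_{\mathbb T}\in L^\infty(\mathbb T)$, both of which you use correctly.
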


Now we discuss the problem of when a Toeplitz operator $T_{\varphi},$ acting on $\calD,$ is compact for $\varphi\in \mathcal T(\calD).$ It is well known that a Toeplitz operator $T_{\varphi},$  with $\varphi \in L^{\infty}(\mathbb T),$ acting on the Hardy space  is compact if and only if $\varphi=0.$ Several authors has studied the compactness problem for Toeplitz operators on $\calD$ induced by various class of symbols, see for example \cite{Lee2007},\cite{LuoXiao2018}. However, we offer an alternative proof using the \textit{Toeplitz matrix} nature of the matrix representation of the Toeplitz operators on the Dirichlet space $\calD.$
\begin{proposition}\label{Compact toeplitz}
    Let $\phi \in \calK$. Then $T_\varphi$ is compact if and only if $\varphi = 0$.
\end{proposition}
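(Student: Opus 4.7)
The plan is to exploit the Toeplitz matrix representation from Proposition \ref{Toeplitz matrix repn} together with the standard Hilbert space fact that a compact operator maps weakly null sequences to norm null sequences. One direction is trivial: if $\varphi = 0$, then $T_\varphi = 0$ is certainly compact. So assume $T_\varphi$ is compact and try to conclude $\hat{\varphi}(m) = 0$ for every $m \in \Z$.

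First I would set $e_n := z^n/\sqrt{n}$, which forms an orthonormal system in $\calD$ (since $\|z^n\|_{\calD}^2 = n$, as seen from the reproducing kernel $R_w(z) = \sum_{k \ge 1} z^k \bar w^k / k$). As any orthonormal sequence in a Hilbert space, $e_n \rightharpoonup 0$ weakly, so compactness of $T_\varphi$ forces $\|T_\varphi e_n\|_{\calD} \to 0$. By Proposition \ref{Toeplitz matrix repn},
\begin{equation*}
T_\varphi(z^n) = \sum_{k \ge 1} \hat{\varphi}(k - n)\, z^k,
\end{equation*}
and since $\|z^k\|_{\calD}^2 = k$, this yields
\begin{equation*}
\|T_\varphi e_n\|_{\calD}^2 = \frac{1}{n} \sum_{k \ge 1} k\,|\hat{\varphi}(k-n)|^2.
\end{equation*}

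Next, I would isolate a single Fourier coefficient. Fix $m \in \Z$ and take $n$ large enough that $n + m \ge 1$. Dropping all terms other than $k = n+m$ in the above sum gives the lower bound
\begin{equation*}
\|T_\varphi e_n\|_{\calD}^2 \;\ge\; \frac{n+m}{n}\,|\hat{\varphi}(m)|^2.
\end{equation*}
Letting $n \to \infty$, the left-hand side tends to $0$ while the right-hand side tends to $|\hat{\varphi}(m)|^2$, forcing $\hat{\varphi}(m) = 0$. Since $m \in \Z$ was arbitrary, every Fourier coefficient of $\varphi$ vanishes. As $\varphi$ is a bounded harmonic function on $\D$ whose expansion $\sum_{k \ge 0} \hat{\varphi}(k) z^k + \sum_{k > 0} \hat{\varphi}(-k) \bar z^k$ is identically zero, we conclude $\varphi = 0$.

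I do not expect any serious obstacle here: the argument is essentially the classical Hardy-space proof translated via the Toeplitz matrix structure, and the only mildly delicate point is the bookkeeping of the weight $k$ in the norm $\|\cdot\|_{\calD}^2$, which is precisely what makes the ratio $(n+m)/n \to 1$ work in our favor.
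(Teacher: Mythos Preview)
Your proof is correct and follows essentially the same approach as the paper: both exploit the Toeplitz matrix representation of Proposition \ref{Toeplitz matrix repn} together with the fact that a compact operator sends the orthonormal sequence $\{z^n/\|z^n\|\}$ to a norm-null sequence. The only cosmetic difference is that the paper works with $T_\varphi^*$ and bounds a single matrix entry via Cauchy--Schwarz, whereas you work with $T_\varphi$ and bound $\|T_\varphi e_n\|^2$ from below by a single term; the resulting ratio $(n+m)/n \to 1$ plays the same role as the paper's $\sqrt{1+k/m}$.
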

\begin{proof} 
Let $\varphi \in \calK$. We write
\begin{equation*}
    \varphi (z) = \sum_{k \ge 0} c_k z^k + \sum_{k \ge 1} c_{-k} \overline{z}^k, \qquad z \in \D.
\end{equation*}
Fix a  $k \ge 0$. By Proposition \ref{Toeplitz matrix repn}, we have that for any $m\geqslant 1,$
\begin{equation*}
    c_k = \frac{1}{\norm{z^m}^2} \ip{T_{\varphi}z^{m+k}, z^m}.
\end{equation*}
Now assume that $T_\varphi$ is compact. Since $\{z^m: m\geqslant 1\}$ is an orthogonal basis of $\mathcal D_0,$ we have that $T_\varphi ^* \left( \frac{z^m}{\norm{z^m}} \right)$ converges to $0$ in $\calD$. Hence, we have that for each $m \ge 1$,
\begin{align*}
    \abs{c_k} &= \frac{1}{\norm{z^m}^2} \abs{\ip{T_{\varphi}z^{m+k}, z^m}} 
    \leqslant \sqrt{1+ \frac{k}{m}} \norm{T_\varphi ^* \left( \frac{z^m}{\norm{z^m}} \right)}.
\end{align*} 
Now, letting $m \to \infty$, we have that $c_k = 0$ for each $k \ge 0$. A similar argument shows that $c_{-k} = 0$ for all $k \ge 1$. Thus, we have that $\varphi = 0$. 
\end{proof}

Let $\mathcal M(\mathcal D_0)$ be the multiplier algebra of $\calD$, that is,
\begin{align*}
    \mathcal M(\mathcal D_0)= \{\psi \in \mathcal O(\D): \psi f \in \calD \mbox{\;\;for every\,\,} f\in\calD\}.
\end{align*} It is straightforward to verify that 
\begin{align*}
    \mathcal M(\mathcal D_0)= \{\psi \in H^{\infty}(\D): |\psi'(z)|^2dA(z)\, \mbox{is a Carleson measure for }\,\calD\},
\end{align*} 
see \cite[Theorem 5.1.7]{Primer} for a similar argument of the proof. Let $H^{\infty}_0(\D)$ denotes the set of all bounded holomorphic function on $\D$ which vanishes at $0,$ i.e.
\begin{align*}
    H^{\infty}_0(\D)= \{f\in H^{\infty}(\D): f(0)=0\}.
\end{align*}
We conclude this section after showing that $ \mathcal T(\calD)= \mathcal M(\calD) + \overline{H^{\infty}_0(\mathbb D)}$ and both the holomorphic and antiholomorphic part of $\varphi\in \calK$ are in $\calK.$ 
\begin{proposition}\label{Toeplitz symbol decomposition}
 Let $\varphi$ be a harmonic function on $\mathbb D$ and suppose that 
    \begin{equation*}
	\varphi(z) = \varphi_1(z) +\varphi_2(z) , \,\mbox{where\,\,}\,\varphi_1(z) = \sum_{k\ge 0} \hat{\varphi} (k) z^{k} , \varphi_2= \sum_{k>0} \hat{\varphi} (-k) \overline{z}^{k}, \,\, z\in \D.
    \end{equation*} 
    Then $\varphi\in \mathcal T(\calD)$ if and only if $\varphi_1 \in \mathcal M(\mathcal D_0)$ and $\varphi_2\in \overline{H^{\infty}_0}.$ In particular both $\varphi_1$ and $\varphi_2$ are in $\calK.$
    \label{prop:decomposition-holomorphic-antiholomorphic}
\end{proposition}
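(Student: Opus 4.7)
The plan is to prove the equivalence in both directions, with the reverse direction being straightforward and the forward direction carrying the substantive content. Once the biconditional is established, the ``in particular'' claim that $\varphi_1, \varphi_2 \in \mathcal T(\mathcal D_0)$ is immediate from the trivial inclusions $\mathcal M(\mathcal D_0) \subseteq \mathcal T(\mathcal D_0)$ and $\overline{H^\infty_0(\mathbb D)} \subseteq \mathcal T(\mathcal D_0)$.

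For the $(\Leftarrow)$ direction, I would note that $\mathcal M(\mathcal D_0) \subseteq H^\infty(\mathbb D)$ and that any element of $\overline{H^\infty_0(\mathbb D)}$ is a bounded antiholomorphic function vanishing at $0$, so the sum $\varphi = \varphi_1 + \varphi_2$ is bounded harmonic on $\mathbb D$, i.e., $\varphi \in h^\infty(\mathbb D)$. Because $\varphi_2$ is antiholomorphic, $\partial\varphi/\partial z = \varphi_1'$, and the Carleson-measure condition on $|\varphi_1'|^2 dA$ for $\mathcal D_0$ is built into the characterization of $\mathcal M(\mathcal D_0)$. Hence $\varphi \in \mathcal T(\mathcal D_0)$.

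For the $(\Rightarrow)$ direction, assume $\varphi \in \mathcal T(\mathcal D_0)$, so $\varphi \in h^\infty$ and $|\varphi_1'|^2 dA = |\partial\varphi/\partial z|^2 dA$ is Carleson for $\mathcal D_0$. The Carleson half of the conclusion $\varphi_1 \in \mathcal M(\mathcal D_0)$ is already in hand. I would next observe that $\varphi_1 \in H^\infty$ is equivalent to $\varphi_2 \in \overline{H^\infty_0}$ given $\varphi \in h^\infty$, because $\varphi_2 = \varphi - \varphi_1$ is antiholomorphic with $\varphi_2(0) = 0$, so boundedness of either part forces the other. The entire implication therefore reduces to showing $\varphi_1 \in H^\infty$.

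To execute this main step, I would combine the boundedness of the Toeplitz operator $T_\varphi$ on $\mathcal D_0$ (equivalent to the Carleson condition, by Theorem \ref{thm:bdd-top-op}) with the splitting $T_\varphi(z^n) = z^n \varphi_1(z) + T_{\varphi_2}(z^n)$ from Proposition \ref{Toeplitz matrix repn}. Testing the Carleson inequality $\int_{\mathbb D} |f|^2 |\varphi_1'|^2 dA \le C\|f\|_{\mathcal D_0}^2$ against the reproducing kernels $R_\lambda$, which satisfy $\|R_\lambda\|_{\mathcal D_0}^2 = \log\frac{1}{1-|\lambda|^2}$, and then localizing to small pseudohyperbolic disks around $\lambda$ where $|R_\lambda|^2 \asymp \bigl(\log\frac{1}{1-|\lambda|^2}\bigr)^2$, should yield sharp local $L^2$ control on $\varphi_1'$ near the boundary. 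The main obstacle will be leveraging this derivative estimate, together with the a priori BMOA-type growth $|\varphi_1(\lambda)| = O\bigl(\log\frac{1}{1-|\lambda|}\bigr)$ coming from $\varphi \in h^\infty$, into the pointwise bound $\sup_{\lambda \in \mathbb D} |\varphi_1(\lambda)| < \infty$. Neither the bounded-harmonic hypothesis nor the Carleson hypothesis alone is sufficient for this conclusion, so the decisive argument must genuinely exploit their interaction, possibly via Stegenga's capacitary characterization of Carleson measures for $\mathcal D_0$ or through the reproducing-kernel identity $\varphi_1(\lambda)\|R_\lambda\|^2 = \ip{\varphi_1 R_\lambda, R_\lambda}_{\mathcal D_0}$ applied to $\varphi_1 R_\lambda$, which lies in $\mathcal D_0$ thanks to the boundedness of $T_\varphi$.
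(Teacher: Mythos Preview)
Your reduction is correct: the reverse implication is routine and the forward implication boils down to $\varphi_1\in H^\infty$. But your argument for this last step is not a proof; it is a list of possible tools (``possibly via Stegenga's capacitary characterization \ldots\ or through the reproducing-kernel identity''), and you explicitly flag the step as an unresolved ``main obstacle''. More importantly, your working assumption that ``neither the bounded-harmonic hypothesis nor the Carleson hypothesis alone is sufficient'' sends you in a harder direction than necessary.

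The paper's argument uses only the Carleson hypothesis to obtain $\varphi_1\in H^\infty$; the $h^\infty$ hypothesis on $\varphi$ enters only afterwards, to deduce that $\varphi_2=\varphi-\varphi_1$ is bounded. The Carleson condition says exactly that the multiplication operator $T\colon\mathcal D_0\to\mathcal A^2$, $Tf=\varphi_1'f$, is bounded. Rather than testing this inequality on $R_\lambda$ and then localizing in pseudohyperbolic disks (your plan), the paper computes the adjoint on Bergman kernels, obtaining $T^*K^B_w=\overline{\varphi_1'(w)}\,R_w$. Taking norms and comparing $\|R_w\|_{\mathcal D_0}$ with $\|K^B_w\|_{\mathcal A^2}$ then yields a pointwise estimate on $|\varphi_1'(w)|$, from which the paper concludes that $\varphi_1'$ is bounded on $\mathbb D$, and hence $\varphi_1\in H^\infty$. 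So your instinct to work with reproducing kernels was sound, but the clean move is the adjoint identity $T^*K^B_w=\overline{\varphi_1'(w)}R_w$, which converts the Carleson bound directly into a pointwise bound on $\varphi_1'$ with no localization and no appeal to the $h^\infty$ hypothesis.
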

\begin{proof}
    Since $\varphi\in\calK,$ the map $T: \calD \to \calA$ defined by $T(f)= \varphi_1'f,\,f\in\calD,$ is a bounded linear transformation. It follows that $T^*(K^B_w)= 
    \overline{\varphi_1'(w} R_w$ for every $w\in\mathbb D.$ Thus we obtain that 
     \begin{align*}
        |\varphi_1'(w)|^2 \leqslant \|T\|^2 \frac{R_w(w)}{K^B_w(w)} = \|T\|^2 (1-|w|^2)^2 \ln \left(\frac{1}{1-|w|^2}\right),\qquad w\in \D.
    \end{align*}
    It follows that 
    \begin{align*}
        \lim\limits_{|w|\to 1-} \frac{|\varphi_1'(w)|^2}{1-|w|^2}=0.
    \end{align*}
    This gives us that $\varphi_1'$ is bounded and hence $\varphi_1$ is bounded. It follows that both  $\varphi_1$ and $\varphi_2$ are bounded and in $\calK.$
\end{proof}

\section{Brown-Halmos operator identity and Toeplitz operators}
Let $\mathbb C[z,\bar{z}]$ denotes the set of all trigonometric  polynomials in $z$ and $\bar{z},$ that is, 
 \begin{align*}
     \mathbb C[z,\bar z]:= \Big\{ \sum\limits_{j=0}^n c_{-j}\bar{z}^j + \sum\limits_{j=1}^nc_jz^j,\,c_j\in\mathbb C\Big\}.
 \end{align*}
 Since  the Lebesgue measure $dA$ on unit disc $\mathbb D$ is a Carleson measure for $\mathcal D_0,$ it follows that $\mathbb C[z,\bar{z}] \subseteq \mathcal T(\calD).$ Consider the operator $T_z$ and $T_{\bar{z}}.$ Let $f\in\mathcal D_0$ having power series representation of the form $f(w)=\sum_{j\geqslant 1}a_jw^j,$ $w\in\mathbb D.$ Then 
\begin{align*}
T_z(f)(w)= \sum\limits_{j\geqslant 2}a_{j-1}w^{j},\,\,\,T_{\bar{z}}(f)(w)= \sum\limits_{j\geqslant 1}a_{j+1}w^{j},\,\,w\in\mathbb D.
\end{align*}
Thus w.r.t the orthogonal basis $\mathcal B=\{z^j:j\in\mathbb Z_{\geqslant 1}\}$ of $\mathcal D_0$, the operator $T_z$ and $T_{\bar{z}}$ represents the forward shift and the backward shift operator on $\mathcal D_0$ respectively. Moreover note that $T_{\bar{z}}T_z=I.$ In this section, we will show that the Brown Halmos type operator identity $ T_{\bar{z}}AT_z=A$ completely characterize the Toeplitz operators on $\mathcal D_0$ induced by the symbols in $\mathcal T(\calD).$ First, we provide a characterization of every operator $T\in  \mathcal B(\mathcal D_0)$ which satisfies the Brown Halmos type identity  $T_{\bar{z}}TT_z= T$ in terms of their matrix representation. 
\begin{proposition}\label{BHT implies toeplitz}
Let $T\in \mathcal B(\mathcal D_0).$ Suppose $\left[T\right]$ denotes the matrix representation of $T$ w.r.t the orthogonal basis $\mathcal B=\{z^j:j\in\mathbb Z_{\geqslant 1}\}$ of $\mathcal D_0,$ that is,
\begin{align*}
 Tz^{j} = \sum_{i \ge 1} [T]_{i,j} z^{i}, \qquad j\ge 1.
\end{align*}
Then $T$ satisfies the identity $T_{\bar{z}}TT_z= T$ if and only if 
\begin{align*}
[T]_{i,j}= [T]_{i+1,j+1},\,\,\mbox{for every}\,\,i,j\geqslant 1.
\end{align*}
\end{proposition}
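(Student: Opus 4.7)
The plan is to translate the operator identity $T_{\bar z}TT_z=A$ directly into a condition on matrix entries by evaluating both sides on each basis vector $z^{j}$ and comparing coefficients.

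First, I would record how the shift operators act on the basis $\mathcal{B}=\{z^{j}:j\ge 1\}$. From the formulas recalled just before the proposition, $T_{z}z^{j}=z^{j+1}$ for $j\ge 1$, while $T_{\bar z}z^{j}=z^{j-1}$ for $j\ge 2$ and $T_{\bar z}z=0$. In particular, $T_{\bar z}$ is a bounded operator on $\mathcal{D}_0$, so it can be applied termwise to any series that converges in the Dirichlet norm.

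Fix $j\ge 1$. Using the defining expansion $Tz^{k}=\sum_{i\ge 1}[T]_{i,k}z^{i}$ (convergent in $\mathcal{D}_0$ because $T$ is bounded), I would compute
\[
TT_{z}z^{j}=Tz^{j+1}=\sum_{i\ge 1}[T]_{i,j+1}z^{i}.
\]
Applying $T_{\bar z}$ termwise and using $T_{\bar z}z=0$ and $T_{\bar z}z^{i}=z^{i-1}$ for $i\ge 2$,
\[
T_{\bar z}TT_{z}z^{j}=\sum_{i\ge 2}[T]_{i,j+1}z^{i-1}=\sum_{i\ge 1}[T]_{i+1,j+1}z^{i}.
\]
On the other hand $Tz^{j}=\sum_{i\ge 1}[T]_{i,j}z^{i}$. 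Since $\mathcal{B}$ is an orthogonal basis of $\mathcal{D}_0$, the two expansions give the same vector in $\mathcal{D}_0$ if and only if the corresponding coefficients coincide: $[T]_{i+1,j+1}=[T]_{i,j}$ for every $i\ge 1$. This proves both implications simultaneously, since for the converse direction the identity $T_{\bar z}TT_{z}z^{j}=Tz^{j}$ for every $j\ge 1$ extends by linearity to $\mathrm{span}\,\mathcal{B}$ and then by density and the boundedness of $T_{\bar z}TT_{z}$ and $T$ to all of $\mathcal{D}_0$.

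I do not foresee a substantial obstacle here; the only small point needing care is the justification of applying $T_{\bar z}$ termwise to the series $Tz^{j+1}$, which is immediate from continuity of $T_{\bar z}$. The entire proof is therefore a short matrix-entry calculation together with a standard density argument.
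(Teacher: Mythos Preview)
Your proof is correct and follows essentially the same approach as the paper: compute $T_{\bar z}TT_{z}z^{j}$ by applying the shifts to the basis expansion and compare coefficients with $Tz^{j}$, using that $\{z^{j}\}$ is an orthogonal basis. Apart from a harmless typo in your first line (``$T_{\bar z}TT_z=A$'' should read ``$=T$''), your write-up simply makes explicit the continuity/density justifications that the paper leaves implicit.
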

\begin{proof}
It is straightforward to verify that for every $j\geqslant 1,$
\begin{align*}
T_{\bar{z}}TT_z(z^j)&= T_{\bar{z}} \left(\sum_{i \ge 1} [T]_{i,j+1} z^{i}\right)
= \sum_{i \ge 1} [T]_{i+1,j+1} z^{i}.
\end{align*}
Since $\{z^j:j\in\mathbb Z_{\geqslant 1}\}$ is an orthogonal basis of $\mathcal D_0,$ it follows immediately that $T_{\bar{z}}TT_z= T$ if and only if $[T]_{i,j}= [T]_{i+1,j+1}$ for every $i,j\geqslant 1.$ This completes the proof.
\end{proof}
In view of Proposition \ref{Toeplitz matrix repn}, the following corollary is now immediate.
\begin{corollary}\label{toeplitz condition satisfies}
For any $\psi\in \calK,$ the Toeplitz operators $T_{\psi}$ satisfies the operator identity 
\begin{align*}
    T_{\bar{z}}T_{\psi}T_z= T_{\psi}.
\end{align*}
\end{corollary}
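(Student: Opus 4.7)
The plan is to derive this as an immediate consequence of the two preceding propositions, using only the fact that the matrix of $T_\psi$ is of Toeplitz form. Since $\calK \subseteq \tilde{\mathcal T}(\calD)$ (noted in Remark \ref{remark about k and k tilde}), Proposition \ref{Toeplitz matrix repn} applies to any $\psi\in\calK$ and yields
\begin{equation*}
[T_\psi]_{i,j} = \hat\psi(i-j), \qquad i,j \ge 1,
\end{equation*}
where $\hat\psi(k)$ are the Fourier coefficients of $\psi$ as in the statement of Proposition \ref{Toeplitz matrix repn}.

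Next I would simply read off that the right-hand side depends only on the difference $i-j$, so that
\begin{equation*}
[T_\psi]_{i+1,j+1} = \hat\psi((i+1)-(j+1)) = \hat\psi(i-j) = [T_\psi]_{i,j}
\end{equation*}
for every $i,j\ge 1$. By Proposition \ref{BHT implies toeplitz}, this matrix identity is equivalent to the operator identity $T_{\bar z} T_\psi T_z = T_\psi$, and the proof is complete.

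Since the statement is truly a corollary and each of the two inputs does the real work (one produces the Toeplitz structure of $[T_\psi]$, the other recognises that such a structure is precisely the Brown--Halmos condition), there is no genuine obstacle; the only thing to be careful about is citing the correct symbol class, namely ensuring $\psi \in \calK$ so that $T_\psi$ is bounded (via Corollary \ref{bounded vs k tilde}) and also lies in $\tilde{\mathcal T}(\calD)$ so that Proposition \ref{Toeplitz matrix repn} is applicable.
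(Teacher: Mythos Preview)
Your proof is correct and follows essentially the same approach as the paper, which simply states that the corollary is immediate in view of Proposition \ref{Toeplitz matrix repn} (combined with the criterion of Proposition \ref{BHT implies toeplitz}). You have merely spelled out the two-line verification that the paper leaves implicit, including the observation that $\psi\in\calK$ guarantees both boundedness and applicability of Proposition \ref{Toeplitz matrix repn}.
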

Let $T\in \mathcal B(\mathcal D_0)$ and $T$ satisfies the identity $T_{\bar{z}}TT_z= T.$ In view of Proposition \ref{BHT implies toeplitz}, it is natural to consider the symbol $\varphi$ given by 
\begin{align*}
    \varphi(z) = \sum\limits_{j=0}^{\infty} c_{-j}\bar{z}^j + \sum\limits_{j=1}^{\infty}c_jz^j,
\end{align*}
where $c_{i-k}= [T]_{i,k}.$ In the following Lemma we find that this $\varphi$ always define a harmonic function on $\mathbb D.$

\begin{lemma}\label{harmonic function existence}
Let $T\in \mathcal B(\mathcal D_0)$ such that the matrix representation of $T$ w.r.t the orthogonal basis $\{z^j:j\in\mathbb Z_{\geqslant 1}\}$ has the \textit{Toeplitz} form 
  \begin{equation*}
  \left[ T \right]=	\begin{pmatrix}
c_0 &  c_{-1} & c_{-2} & \ldots \\
c_1 & c_0  & c_{-1} & \ddots \\
c_2 & c_1 & c_0 & \\
\vdots & \ddots & \ddots & \ddots
	\end{pmatrix},
\end{equation*}
for some sequence of complex numbers $\{c_j\}_{j\in\mathbb Z}.$ Then the function $\varphi$ given by $$\varphi(z)= \sum_{j\geqslant 1}c_jz^j + \sum_{j\geqslant 0}c_{-j}\bar{z}^j,z\in\mathbb D$$ defines a harmonic function on $\mathbb D.$ Moreover the function $\varphi_1(z)=\sum_{j\geqslant 1}c_jz^j\in \mathcal D_0.$
\end{lemma}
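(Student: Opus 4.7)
My plan is to extract the two required conclusions by testing the operator and its adjoint on a single basis element, namely $z$, and reading off the coefficients $c_j$ from the Toeplitz form of the matrix. The guiding observation is that the first column of $[T]$ already lists all the holomorphic coefficients $(c_0,c_1,c_2,\dots)$, while the first row lists the antiholomorphic ones $(c_0,c_{-1},c_{-2},\dots)$. So finiteness of $\|Tz\|_{\mathcal D_0}$ and $\|T^*z\|_{\mathcal D_0}$ should already force both parts of the expected symbol to be nicely behaved.

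For the holomorphic part $\varphi_1(z)=\sum_{k\ge 1}c_k z^k$, the computation is immediate from the hypothesis: $T(z)=\sum_{i\ge 1}[T]_{i,1}z^{i}=\sum_{i\ge 1}c_{i-1}z^{i}$, and since $\|z^i\|_{\mathcal D_0}^2=i$, one gets
\[
\|Tz\|_{\mathcal D_0}^{2}=\sum_{i\ge 1}i\,|c_{i-1}|^{2}=\sum_{k\ge 0}(k+1)|c_k|^{2}.
\]
Boundedness of $T$ together with $\|z\|_{\mathcal D_0}^2=1$ then yields $\sum_{k\ge 1}k|c_k|^{2}\le \|T\|^{2}<\infty$, which is exactly the $\mathcal D_0$-norm condition for $\varphi_1$. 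In particular $\varphi_1$ defines a holomorphic function on $\mathbb D$.

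For the antiholomorphic part $\varphi_2(z)=\sum_{j\ge 0}c_{-j}\bar z^{j}$, I would pass to the adjoint. Using the Toeplitz form,
\[
\langle T^{*}z,z^{n}\rangle_{\mathcal D_0}=\overline{\langle Tz^{n},z\rangle_{\mathcal D_0}}=\overline{c_{1-n}},
\]
so $T^{*}z=\sum_{n\ge 1}\frac{\overline{c_{1-n}}}{n}z^{n}$ and hence
\[
\|T^{*}z\|_{\mathcal D_0}^{2}=\sum_{n\ge 1}\frac{|c_{1-n}|^{2}}{n}=\sum_{j\ge 0}\frac{|c_{-j}|^{2}}{j+1}\le \|T\|^{2}.
\]
Since a convergent series has vanishing terms, this forces $|c_{-j}|=o(\sqrt{j+1})$, and in particular $\limsup_{j\to\infty}|c_{-j}|^{1/j}\le 1$. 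Therefore $\sum_{j\ge 0}c_{-j}w^{j}$ has radius of convergence at least $1$, $\varphi_2$ is well-defined and antiholomorphic on $\mathbb D$, and $\varphi=\varphi_1+\varphi_2$ is harmonic on $\mathbb D$.

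The step I would expect to be slightly delicate is the antiholomorphic one: a naive coefficientwise estimate of the Toeplitz entries of a bounded operator only gives $|c_{-j}|\le \sqrt{j+1}\,\|T\|$, which is just barely enough for radius of convergence $\ge 1$ and does not put $\varphi_2$ in any nicer space. This is why testing against $T^{*}z$ (and not $T^{*}z^m$ for larger $m$, where the coefficient bound degrades) is the right move; the Toeplitz structure compresses all the information we need into the single identity above.
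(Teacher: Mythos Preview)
Your proof is correct. The argument for $\varphi_1\in\mathcal D_0$ via $\|Tz\|_{\mathcal D_0}^2=\sum_{k\ge 0}(k+1)|c_k|^2$ is exactly what the paper does, and your adjoint computation $\|T^*z\|_{\mathcal D_0}^2=\sum_{j\ge 0}|c_{-j}|^2/(j+1)\le\|T\|^2$ is a clean way to force the radius of convergence of the antiholomorphic part to be at least $1$.

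The only difference from the paper is in this last step: the paper uses the direct Cauchy--Schwarz bound $|c_{i-j}|\le \|T\|\sqrt{j/i}$ on the matrix entries (taking $i=1$, $j=k+1$) to get $|c_{-k}|\le\sqrt{k+1}\,\|T\|$, exactly the ``naive'' estimate you mention in your final paragraph. Your route through $T^*z$ yields the slightly stronger information $\sum_{j\ge 0}|c_{-j}|^2/(j+1)<\infty$, but since the lemma only asks for harmonicity on $\mathbb D$, both arguments do the job and neither is materially harder than the other.
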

\begin{proof}
From the given matrix representation of $T$ with respect to the orthogonal basis $\{z^j:j\in\mathbb Z_{\geqslant 1}\}$ of $\mathcal D_0$ we have that
\begin{align*}
T(z^j)= \sum\limits_{i=1}^{\infty}c_{i-j}z^i,\,\,j\geqslant 1.
\end{align*} 
It follows that 
\begin{align*}
c_{i-j}= \frac{1}{\|z^i\|^2_{\mathcal D_0}}\ip{ T(z^j),z^i}_{\mathcal D_0},\,\,\,i,j\geqslant 1.
\end{align*}
Applying Cauchy-Schwarz inequality we obtain that $|c_{i-j}|\leqslant \|T\| \frac{\sqrt{j}}{\sqrt{i}}$ for $i,j\geqslant 1.$ This gives us that $|c_k| \leqslant \frac{1}{\sqrt{k+1}}\|T\|$ for every $k\geqslant 1$ and $|c_{-j}| \leqslant \sqrt{j+1}  \|T\|$ for every $j\geqslant 1.$ It follows that both the power series $\sum_{j\geqslant 1}c_jz^j$ and $\sum_{j\geqslant 0}c_{-j}\bar{z}^j$ has radius of convergence at least equal to $1.$ Hence $\varphi$ being a sum of holomorphic and antiholomorphic function on $\mathbb D$ must define a harmonic function on $\mathbb D.$ Since $Tz\in \mathcal D_0,$ we have that 
\begin{align*}
\|Tz\|^2_{\mathcal D_0}= \sum\limits_{j=0}^{\infty}|c_j|^2(j+1) <\infty.
\end{align*}
Hence it follows that $\varphi_1(z)=\sum_{j\geqslant 1}c_jz^j\in \mathcal D_0.$ This completes the proof.
\end{proof}
Before proceeding to show that the symbol $\varphi$ induced by $T$ as mentioned in the Lemma \ref{harmonic function existence} belongs to $\calK$ and $T= T_{\varphi},$ we first note down a few basic facts about the decomposition of an operator into homogeneous parts with respect to translations. For each $\theta\in\mathbb [0,2\pi),$ let $\tau_{e^{i\theta}}$ be the translation operator in $\calB \left( \mathcal D_0 \right)$, given by 
\begin{align*}
    (\tau_{e^{i\theta}}f)(z)= f(e^{-i\theta}z),\,\,\,\,z\in\mathbb D.
\end{align*}
Note that  $\tau_{e^{i\theta}}$ is an unitary operator in $\calB \left( \mathcal D_0 \right).$ It follows that for any operator $T\in \calB \left( \mathcal D_0 \right)$,
\begin{equation}
Tf = \lim_{N\to \infty} \sum_{\abs{k} \le N} \left( 1-\frac{\abs{k}}{N+1}  \right) T_{k}f, \qquad f \in \mathcal D_0,
\label{eqn:approx}
\end{equation}
where for each $k\in \mathbb Z,$ the operator $T_k$ denotes the homogeneous part of $T$ of degree $k$ and it is given by 
\begin{align*}
T_kf=\frac{1}{2\pi}\int_{\mathbb T}e^{-ik\theta}\tau_{e^{-i\theta}}T\tau_{e^{i\theta}}(f) d\theta,\,\,f\in \mathcal D_0,
\end{align*}
see \cite[Section I.2]{Katz2004}, see also \cite[Sec 2]{Olof08}.

\begin{proposition}\label{lem:approx-by-hom-parts}
Let $T\in \mathcal B(\mathcal D_0)$ such that the matrix representation of $T$ w.r.t the orthogonal basis $\{z^j:j\in\mathbb Z_{\geqslant 1}\}$ has the \textit{Toeplitz} form 
  \begin{equation*}
  \left[ T \right]=	\begin{pmatrix}
c_0 &  c_{-1} & c_{-2} & \ldots \\
c_1 & c_0  & c_{-1} & \ddots \\
c_2 & c_1 & c_0 & \\
\vdots & \ddots & \ddots & \ddots
	\end{pmatrix},
\end{equation*}
for some sequence of numbers $\{c_j\}_{j\in\mathbb Z}.$ Then it follows that 
\begin{align*}
T_k= \begin{cases}
c_kT_{z^k},& k\geqslant 1\\
c_{-k}T_{\bar{z}^{|k|}}, & k\leqslant 0
\end{cases}, \,\,\mbox{and\,\,\,}\,\,Tf= \lim\limits_{N\to\infty} T_{\sigma_{\!_N}(\varphi)}(f),
\end{align*}
where $\sigma_{\!_N}(\varphi)$ denotes the Ces\'{a}ro sum of the series $\varphi(z)=\sum_{k\geqslant 0} c_{-k}\bar{z}^{k} + \sum_{k\geqslant 1} c_{k}z^{k}$, that is, 
\begin{align*}
\sigma_{\!_N}(\varphi)(z)=   \left( \sum_{k=0}^N \left( 1-\frac{\abs{k}}{N+1}  \right) c_k z^k  + \sum_{k=1}^N \left( 1-\frac{\abs{k}}{N+1}  \right) c_{-k} \bar{z}^{-k} \right).
\end{align*}
\end{proposition}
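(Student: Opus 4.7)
The plan is to compute each homogeneous component $T_k$ explicitly on the orthogonal basis $\{z^j : j\geq 1\}$ of $\mathcal D_0$, identify it with the Toeplitz operator of a suitable monomial symbol via Proposition \ref{Toeplitz matrix repn}, and then read off the Ces\`aro-type representation directly from \eqref{eqn:approx}.

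First, I would evaluate $\tau_{e^{-i\theta}} T \tau_{e^{i\theta}}(z^j)$ for a fixed $j\geq 1$. Using $\tau_{e^{i\theta}}z^j = e^{-ij\theta}z^j$, the Toeplitz form $Tz^j = \sum_{i\geq 1}c_{i-j}z^i$ (convergent in $\mathcal D_0$), and the fact that $\tau_{e^{-i\theta}}$ is a continuous linear isometry with $\tau_{e^{-i\theta}} z^i = e^{ii\theta}z^i$, one obtains
\[
\tau_{e^{-i\theta}}T\tau_{e^{i\theta}}(z^j) = \sum_{i\geq 1} c_{i-j}\, e^{i(i-j)\theta}\, z^i,
\]
whose $\mathcal D_0$-norm is $\|Tz^j\|_{\mathcal D_0}$ for every $\theta$. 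Pairing this with the basis vector $z^m$ (which is a bounded linear functional on $\mathcal D_0$), passing the inner product through the integral defining $T_k$, and invoking the orthogonality $\tfrac{1}{2\pi}\int_0^{2\pi} e^{i(m-j-k)\theta}\,d\theta = \delta_{m-j,k}$ yields
\[
\ip{T_k z^j, z^m}_{\mathcal D_0} = c_k\,\|z^m\|^2_{\mathcal D_0}\,\delta_{m,\,j+k},
\]
so $T_k(z^j)= c_k\, z^{j+k}$ when $j+k\geq 1$ and $T_k(z^j)=0$ otherwise.

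Next, Proposition \ref{Toeplitz matrix repn} applied to the monomials $z^k$ and $\bar z^k$, whose only nonzero Fourier coefficients are $\hat\psi(k)=1$ and $\hat\psi(-k)=1$ respectively, gives $T_{z^k}(z^j)=z^{j+k}$ for $k\geq 0$ and $T_{\bar z^k}(z^j)= z^{j-k}$ for $j>k$ (and $0$ otherwise). Matching with the preceding step produces the identifications $T_k = c_k T_{z^k}$ for $k\geq 0$ and $T_{-k} = c_{-k} T_{\bar z^k}$ for $k\geq 1$, which is precisely the claimed formula for the homogeneous parts. Substituting these into \eqref{eqn:approx} and grouping positive and non-positive indices gives, for every $f\in\mathcal D_0$,
\[
Tf = \lim_{N\to\infty}\left(c_0 I + \sum_{k=1}^N\Big(1-\tfrac{k}{N+1}\Big)\bigl(c_k T_{z^k} + c_{-k}T_{\bar z^k}\bigr)\right)f.
\]
Since $\sigma_N(\varphi)\in \C[z,\bar z]\subseteq \mathcal T(\calD)$ and the defining integral formula for $T_\psi$ makes the assignment $\psi\mapsto T_\psi$ manifestly linear on trigonometric polynomials, the bracketed operator coincides with $T_{\sigma_N(\varphi)}$, which yields the Ces\`aro representation.

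The only non-formal step is the interchange in the first paragraph, namely that the bounded functional $\ip{\cdot, z^m}_{\mathcal D_0}$ commutes with the integral defining $T_k$. This is routine because the integrand $\theta\mapsto \tau_{e^{-i\theta}}T\tau_{e^{i\theta}}(z^j)$ is a norm-continuous, uniformly bounded $\mathcal D_0$-valued function (by unitarity of $\tau$), so the integral can be interpreted weakly; everything else reduces to bookkeeping with the matrix entries supplied by Proposition \ref{Toeplitz matrix repn} together with the linearity of $\psi\mapsto T_\psi$ on $\C[z,\bar z]$.
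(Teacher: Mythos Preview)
Your proof is correct and follows essentially the same route as the paper: compute the matrix entries of $T_k$ by unwinding the conjugation $\tau_{e^{-i\theta}}T\tau_{e^{i\theta}}$ on basis vectors and integrating against $e^{-ik\theta}$, identify the resulting diagonal operators with $c_kT_{z^k}$ or $c_{-k}T_{\bar z^{|k|}}$ via Proposition~\ref{Toeplitz matrix repn}, then substitute into \eqref{eqn:approx} and use linearity of $\psi\mapsto T_\psi$ on trigonometric polynomials. The only difference is cosmetic---you first expand $\tau_{e^{-i\theta}}T\tau_{e^{i\theta}}(z^j)$ as a series in $\mathcal D_0$ and then pair with $z^m$, while the paper carries the inner product $\ip{T_k z^n, z^m}$ through the integral from the outset; the computations are otherwise identical.
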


\begin{proof}
Let $T \in \calB (\calD )$. Let $k \in \mathbb Z$. Notice that for $n,m \ge 1$,
\begin{align*}
    \ip{T_k z^n, z^m} &= \frac{1}{2\pi} \int_{\T } e^{-ik\theta} \ip{\tau_{e^{-i\theta}} T\tau_{e^{i\theta}}z^n , z^m} d\theta \\
        &= \frac{1}{2\pi} \int_{\T } e^{-ik\theta} \ip{T(e^{-in\theta}z ^n) , \tau_{e^{-i\theta}}^*z^m} d\theta \\
        &= \frac{1}{2\pi} \int_{\T } e^{-ik\theta} \ip{T(e^{-in\theta}z ^n) , \tau_{e^{i\theta}}z^m} d\theta  \\
        &= \frac{1}{2\pi} \int_{\T } e^{-ik\theta} \ip{T(e^{-in\theta}z ^n) , e^{-im\theta}z^m} d\theta \\
    &= \frac{1}{2\pi} \int_{\T } e^{i(m-n-k)\theta} \ip{Tz^n , z^m} d\theta \\
    &= \ip{Tz^n, z^m} \delta_{m-n, k}.
\end{align*}
This shows that if the matrix of $T$  with respect to the orthogonal basis $\{ z^k : k \ge 1 \}$is given by $(t_{mn})_{m,n \ge 0}$ then the matrix of $T_k$ with respect to the same basis is given by $(t_{mn} \delta_{m-n,k})_{m,n \ge 0}$. Now, if the matrix of $T$ has a Toeplitz form as mentioned in the statement of the Proposition then it is immediate by Proposition \ref{Toeplitz matrix repn} that 
\begin{align*}
T_k= \begin{cases}
c_kT_{z^k},& k\geqslant 1\\
c_{k}T_{\bar{z}^{|k|}}, & k\leqslant 0.
\end{cases}
\end{align*}
Consequently, in view of \ref{eqn:approx}, it follows that
\begin{equation*}
Tf = \lim_{N\to \infty} \left( \sum_{k=0}^N \left( 1-\frac{\abs{k}}{N+1}  \right) c_k T_{z^k}f  + \sum_{k=1}^N \left( 1-\frac{\abs{k}}{N+1}  \right) c_{-k} T_{\overline{z}^k}f \right), \qquad f \in \mathcal D_0.
\end{equation*}
Note that $T_{g+h}(f)=T_g(f)+T_h(f)$ for every $g,h\in \mathcal T(\calD)$ and $f\in\mathcal D_0.$ It follows that
\begin{align*}
   T_{\sigma_{\!_N}(\varphi)}(f)=  \left( \sum_{k=0}^N \left( 1-\frac{\abs{k}}{N+1}  \right) c_k T_{z^k}f  + \sum_{k=1}^N \left( 1-\frac{\abs{k}}{N+1}  \right) c_{-k} T_{\overline{z}^k}f \right), \qquad f \in \mathcal D_0.
\end{align*}
Thus we obtain that $\lim Tf= \lim\limits_{N\to\infty} T_{\sigma_{\!_N}(\varphi)}(f)$ for every $f\in\mathcal D_0.$
\end{proof}
Now we are ready to give the proof of the main theorem of this section. 
\begin{proof}[Proof of the Theorem \ref{thm:BHT theorem for D}]
 Let $T\in \mathcal B(\mathcal D_0)$ such that the identity $T_{\bar{z}}TT_z= T$ holds. Then by Proposition \ref{BHT implies toeplitz}, we find that matrix representation of $T$ w.r.t. the orthogonal basis $\{z^j:j\in \mathbb Z_{\geqslant 1}\}$ of $\mathcal D_0$ has the \textit{Toeplitz} form 
  \begin{equation*}
  \left[ T \right]=	\begin{pmatrix}
c_0 &  c_{-1} & c_{-2} & \ldots \\
c_1 & c_0  & c_{-1} & \ddots \\
c_2 & c_1 & c_0 & \\
\vdots & \ddots & \ddots & \ddots
	\end{pmatrix},
\end{equation*}
for some sequence of numbers $\{c_j\}_{j\in\mathbb Z}.$ Moreover in view of Lemma \ref{harmonic function existence}, the function $\varphi$ given by 
\begin{align*}
\varphi(z)= \sum_{j\geqslant 1}c_jz^j + \sum_{j\geqslant 0}c_{-j}\bar{z}^j,z\in\mathbb D,
\end{align*}
defines a harmonic function on $\mathbb D.$ We write $\varphi_1(z)= \sum_{j\geqslant 1}c_jz^j.$ By Lemma \ref{harmonic function existence}, it also follows that $\varphi_1\in \mathcal D_0$. 
For each $w\in\mathbb D,$ we consider 
\begin{align*}
E_w(z)= \frac{z}{1-\bar{w}z} = \sum\limits_{j=0}^{\infty}z^{j+1}\bar{w}^j, \qquad z\in \mathbb D
\end{align*}
 It is straightforward to verify that $\|E_w\|^2_{\mathcal D_0}= \frac{1}{(1-|w|^2)^2}.$ Now we consider 
 \begin{align*}
 \tilde{E}_w(z) = \frac{E_w(z)}{\|E_w\|}= (1-|w|^2)\frac{z}{1-\bar{w}z},\qquad z\in \mathbb D.
 \end{align*}
Note that $\frac{\dou E_{w} (z)}{\dou z}= \tilde{K^B_w}(z),$ the normalised reproducing kernel of $\calA$ at $w\in\mathbb D.$ Using \eqref{lem:inner-product} and the reproducing property of $K^B_w,$ it is immediate that for $k\ge 0$,
\begin{align*}
  \ip{T_{z^k} \tilde{E}_w, \tilde{E}_w} = k(1-\abs{w}^2)w^{k} + w^k \,\,\mbox{and\,\,}\ip{T_{\overline{z}^k} \tilde{E}_w, \tilde{E}_w} = \overline{w}^k,\,\,w\in\mathbb D.
\end{align*}
Now, fix $N \in \mathbb N, w\in \D$ and Observe that 
\begin{align*}
    \ip{ \sum_{k=0}^N \left( 1-\frac{\abs{k}}{N+1}  \right) c_k T_{z^k} \tilde{E}_w, \tilde{E}_w} &= \sum_{k=0}^N \left( 1-\frac{\abs{k}}{N+1}  \right) c_k \left( k(1-\abs{w}^2)w^{k} + w^k \right) 
\end{align*}

and similarly, we have 
\begin{align*}
    \ip{ \sum_{k=1}^N \left( 1-\frac{\abs{k}}{N+1}  \right) c_k T_{\bar{z}^k} \tilde{E}_w, \tilde{E}_w}
    &= \sum_{k=1}^N \left( 1-\frac{\abs{k}}{N+1}  \right) c_k \overline{w}^k.
\end{align*}
This gives us that for any $w\in\mathbb D,$
\begin{align*}
    \ip{T_{\sigma_{\!_N}(\varphi)}\tilde{E}_w, \tilde{E}_w} &= w(1-\abs{w}^2)\sum_{k=0}^N \left( 1-\frac{\abs{k}}{N+1}  \right) c_k kw^{k-1} + \sigma_{\!_N}(\varphi)(w)\\
    &=w(1-\abs{w}^2)\sigma_{\!_N}(\varphi_1')(w) + \sigma_{\!_N}(\varphi)(w).
\end{align*}
Now using Proposition \ref{lem:approx-by-hom-parts} we have that for any $w\in\mathbb D,$
\begin{align}\label{Berezin action formula}
\ip{T \tilde{E}_w, \tilde{E}_w} &=  \lim_{N\to\infty} \ip{T_{\sigma_{\!_N}(\varphi)}\tilde{E}_w, \tilde{E}_w} \notag\\
&=   w(1-\abs{w}^2) \lim_{N\to\infty}\sigma_{\!_N}(\varphi_1')(w) +  \lim_{N\to\infty}\sigma_{\!_N}(\varphi)(w), \notag\\
&= w(1-|w|^2) \varphi_1'(w) + \varphi(w).
\end{align}
It is well known that $\mathcal D_0$ is contained in $\mathcal B_0,$ the little Bloch space given by
\begin{align*}
\mathcal B_0=\Big\{ f\in\mathcal O(\mathbb D): \lim\limits_{|z|\to 1}(1-|z|^2)|f'(z)|=0\Big\},
\end{align*}
see \cite[Chapter 5]{Zhu2007}. So we find that $\lim\limits_{|z|\to 1}(1-|z|^2)|\varphi_1'(z)|=0.$ Since $\tilde{E}_w$ is a unit vector, we have that $|w(1-|w|^2) \varphi_1'(w) + \varphi(w)| \leqslant \|T\|$ for every $w\in\mathbb D.$ It follows that $\varphi$ is bounded on $\mathbb D$ and 
\begin{align}\label{sup norm vs operator norm}
   \|\varphi\|_{\infty} \leqslant \|T\|.
\end{align}
Thus we obtain that 
$\varphi\in \tilde{\mathcal T}(\calD).$ Now consider the densely defined Toeplitz operator $T_{\varphi}$ on $\mathcal D_0.$ Note that
\begin{align*}
\ip{T_{\varphi}z^n,z^m}_{\mathcal D_0}= \ip{Tz^n,z^m}_{\mathcal D_0},\,\,\,n,m\geqslant 1.
\end{align*}
Since $T$ is a bounded operator and span$\{z^j:j\in \mathbb Z_{\geqslant 1}\}$ is dense in $\mathcal D_0,$ it follows that $T_{\varphi}$ extends to a bounded operator on $\mathcal D_0$ and $T_{\varphi}=T.$ In view of Theorem \ref{thm:bdd-top-op}, it follows that $\varphi \in \mathcal T(\calD).$ This completes the proof.
\end{proof}
 
\begin{corollary}\label{cor:WOT-closed}
The subspace $\{ T_\varphi : \varphi \in \calK \} \subseteq \calB (\calD)$ is closed in the weak operator topology (W.O.T) of $\calB (\calD)$.
\end{corollary}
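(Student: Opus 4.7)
The plan is to recast Corollary \ref{cor:WOT-closed} as a statement about the kernel of a WOT-continuous linear map on $\calB(\calD)$, at which point WOT-closedness is automatic. By Corollary \ref{toeplitz condition satisfies} every $T_\varphi$ with $\varphi \in \calK$ satisfies the Brown--Halmos identity $T_{\bar z} T_\varphi T_z = T_\varphi$, and by Theorem \ref{thm:BHT theorem for D} every bounded operator on $\calD$ satisfying this identity is of the form $T_\psi$ for some (unique) $\psi \in \calK$. Hence I would first rewrite
\begin{equation*}
\{ T_\varphi : \varphi \in \calK \} = \{ A \in \calB(\calD) : T_{\bar z} A T_z - A = 0 \}.
\end{equation*}

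Next, I would verify the standard fact that for any fixed $B, C \in \calB(\calD)$, the map $\Phi : \calB(\calD) \to \calB(\calD)$ defined by $\Phi(A) = BAC$ is WOT-continuous. Indeed, for every $f, g \in \calD$ one has $\ip{BAC f, g} = \ip{A(Cf), B^* g}$, so if $A_\alpha \to A$ in WOT then $\ip{BA_\alpha Cf, g} \to \ip{BACf, g}$. Applying this with $B = T_{\bar z}$ and $C = T_z$, and combining with the trivially WOT-continuous identity map, the map $A \mapsto T_{\bar z} A T_z - A$ is WOT-continuous from $\calB(\calD)$ into itself.

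Its kernel is therefore the preimage of $\{0\}$ under a WOT-continuous linear map, hence WOT-closed. Together with the identification displayed above, this yields the claim. There is no genuine obstacle here: the substantive content has already been done in Theorem \ref{thm:BHT theorem for D}, and all that remains is the routine observation that left and right multiplication by a fixed bounded operator preserves WOT convergence.
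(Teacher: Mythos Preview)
Your proof is correct and follows essentially the same approach as the paper: identify the set of Toeplitz operators with the solution set $\{A : T_{\bar z} A T_z = A\}$ via Theorem \ref{thm:BHT theorem for D} and Corollary \ref{toeplitz condition satisfies}, then observe that this set is WOT-closed because left and right multiplication by fixed operators is WOT-continuous. The paper's version is terser (it simply calls the WOT-closedness of $\{A : XAY = A\}$ a ``standard argument''), whereas you spell out that argument explicitly, but the substance is identical.
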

\begin{proof}
    Let $X,Y\in \mathcal B(\calD).$ A standard argument shows that the subspace $S_{X,Y}$ of $\mathcal B(\calD)$ given by  $S_{X,Y}=\{A\in \mathcal B(\calD) : XAY=A\}$ is closed in the weak operator topology in $\mathcal B(\calD).$ Thus the corollary follows immediately in view of Theorem \ref{thm:BHT theorem for D}.
\end{proof}

\begin{corollary}
For any $\varphi\in  \mathcal T(\calD),$ the sequence of Toeplitz operators $\{T_{\sigma_{\!_N}(\varphi)}\}_{N\geqslant 1}$ converges to $T_{\varphi}$ in the strong operator topology in $\calB(\calD).$ 
\end{corollary}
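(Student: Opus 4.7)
The plan is to observe that this corollary is an essentially immediate consequence of Proposition \ref{lem:approx-by-hom-parts} applied to the operator $T_\varphi$ itself. Since strong operator convergence is nothing more than pointwise norm convergence on $\mathcal D_0$, all that must be verified is that the hypotheses of Proposition \ref{lem:approx-by-hom-parts} apply when the operator in question is $T_\varphi$.

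First, since $\varphi \in \mathcal T(\calD)$, Theorem \ref{thm:bdd-top-op} ensures that $T_\varphi$ is bounded on $\mathcal D_0$. Next, by Proposition \ref{Toeplitz matrix repn}, the matrix representation of $T_\varphi$ with respect to the orthogonal basis $\{z^j : j \ge 1\}$ is precisely of the Toeplitz form appearing in the hypothesis of Proposition \ref{lem:approx-by-hom-parts}, with the sequence of diagonal constants given by $c_k = \hat{\varphi}(k)$ for $k \in \mathbb Z$. Consequently, the function
\begin{equation*}
\sum_{k \ge 1} c_k z^k + \sum_{k \ge 0} c_{-k} \bar{z}^k
\end{equation*}
in the conclusion of Proposition \ref{lem:approx-by-hom-parts} coincides with $\varphi$ itself.

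Applying Proposition \ref{lem:approx-by-hom-parts} to $T = T_\varphi$ therefore yields
\begin{equation*}
T_\varphi f = \lim_{N \to \infty} T_{\sigma_{\!_N}(\varphi)}(f), \qquad f \in \mathcal D_0,
\end{equation*}
which is exactly the statement that $T_{\sigma_{\!_N}(\varphi)} \to T_\varphi$ in the strong operator topology. Since there is no real obstacle beyond this bookkeeping identification, the proof is essentially a two-line invocation of previously established results; in particular, no uniform boundedness argument is needed because SOT convergence requires only the pointwise limit, which Proposition \ref{lem:approx-by-hom-parts} already delivers.
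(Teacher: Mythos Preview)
Your proof is correct and takes essentially the same approach as the paper, which simply says the result follows immediately from Proposition~\ref{lem:approx-by-hom-parts}. You have just spelled out the verification of the hypotheses (boundedness via Theorem~\ref{thm:bdd-top-op} and the Toeplitz matrix form via Proposition~\ref{Toeplitz matrix repn}) that the paper leaves implicit.
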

\begin{proof}
    The proof follows immediately in view of Proposition \ref{lem:approx-by-hom-parts}.
\end{proof}

\section{Some algebraic properties of Toeplitz operators}\label{Sec 4}
In this section, we investigate some algebraic properties of the Toeplitz operators on the Dirichlet space $\calD.$  Considering the various studies by different authors on Toeplitz operator in Dirichlet space $\calD$ induced by several symbol classes, the results in this section may be expected but these still are interesting facts. In \cite{Lee2007}, Y. J. Lee studied various algebraic properties  for the class of Toeplitz operators  $\{T_{\varphi} : \varphi\in \Omega \cap h(\D)\},$ where the symbol class $\Omega$ is given by $\Omega=\{ \varphi \in \mathscr C^1(\mathbb D):  \varphi, \frac{\dou \varphi }{\dou z}, \frac{\dou \varphi }{\dou \bar z} \in L^{\infty}(\mathbb D)\}$ and $h(\D)$ denotes the set of all complex valued harmonic functions on $\D$. It turns out that all those characterizations obtained by Lee in \cite{Lee2007}, naturally get extended to the Toeplitz operators  $\{T_{\varphi} : \varphi\in \mathcal T(\calD) \},$  acting on $\calD.$  Most of the methods and techniques used by Lee can be used to extend his result for the Toeplitz operators $T_{\varphi} $ induced by the symbol class $\mathcal T(\calD).$   But rather we chose to exploit the Brown-Halmos type operator identity (wherever possible), to obtain the characterizations of these algebraic properties of the Toeplitz operators  acting on $\calD.$ First we show that product of two Toeplitz operator is Toeplitz if and only if  the first symbol is antiholomorphic or the second symbol is holomorphic. We first prove a couple of preliminary lemmas in this regard. The first lemma, in the case of Toeplitz operators on the Hardy space induced by $L^{\infty}(\mathbb T)$ symbols, is well known, see for example \cite[Chapter 3]{Rosenthalbook} and references therein. 

\begin{lemma} \label{lem:rank-one}
    Let $\psi, \varphi \in \mathcal T(\calD)$. Then
    \begin{equation*}
        T_{\overline{z}} T_\psi T_\varphi T_z = T_\psi T_\varphi + \left( T_{\overline{z}} T_{\psi} z \right) \otimes \left( T_{z}^* T_\varphi ^* z \right).
    \end{equation*}
    \end{lemma}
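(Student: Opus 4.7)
The plan is to exploit a rank-one identity relating the shift operators $T_z$ and $T_{\bar z}$. Recall that $T_{\bar z} T_z = I$, but $T_z T_{\bar z} \neq I$. Specifically, a direct computation on the orthogonal basis $\{z^j : j \geqslant 1\}$ of $\mathcal D_0$ (using $T_z z^j = z^{j+1}$, $T_{\bar z} z = 0$, and $T_{\bar z} z^j = z^{j-1}$ for $j \geqslant 2$) shows that $I - T_z T_{\bar z}$ is the orthogonal projection onto the one-dimensional subspace $\mathbb C z \subset \mathcal D_0$. Since $\|z\|_{\mathcal D_0}^2 = 1$, this projection equals the rank-one operator $z \otimes z$, yielding the identity $I = T_z T_{\bar z} + z \otimes z$ on $\mathcal D_0$.

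Having established this, I would insert $I = T_z T_{\bar z} + z \otimes z$ between $T_\psi$ and $T_\varphi$ in the expression $T_{\bar z} T_\psi T_\varphi T_z$ to obtain
\[
T_{\bar z} T_\psi T_\varphi T_z = T_{\bar z} T_\psi T_z T_{\bar z} T_\varphi T_z + T_{\bar z} T_\psi (z \otimes z) T_\varphi T_z.
\]
By Corollary \ref{toeplitz condition satisfies}, both $T_\psi$ and $T_\varphi$ satisfy the Brown--Halmos identity, so $T_{\bar z} T_\psi T_z = T_\psi$ and $T_{\bar z} T_\varphi T_z = T_\varphi$. Hence the first term on the right collapses immediately to $T_\psi T_\varphi$.

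For the second term, applying it to an arbitrary $f \in \mathcal D_0$ and using the convention $(a \otimes b)(g) = \langle g, b\rangle a$ gives
\[
T_{\bar z} T_\psi (z \otimes z) T_\varphi T_z (f) = \langle T_\varphi T_z f, z\rangle\, T_{\bar z} T_\psi z = \langle f, T_z^* T_\varphi^* z\rangle\, T_{\bar z} T_\psi z = \bigl((T_{\bar z} T_\psi z) \otimes (T_z^* T_\varphi^* z)\bigr)(f),
\]
which is exactly the rank-one operator appearing in the statement. The only point requiring some care is to distinguish $T_z^*$ from $T_{\bar z}$ when transferring $T_z$ across the inner product, as emphasized elsewhere in the paper; otherwise the argument is purely mechanical, requiring no estimates or convergence arguments.
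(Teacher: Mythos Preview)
Your proof is correct and follows essentially the same approach as the paper: insert the rank-one identity $I = T_z T_{\bar z} + z \otimes z$ between $T_\psi$ and $T_\varphi$, use the Brown--Halmos identity for each factor, and identify the remaining term as the rank-one operator $(T_{\bar z} T_\psi z)\otimes (T_z^* T_\varphi^* z)$. You simply spell out in more detail the steps that the paper leaves as ``straightforward.''
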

    \begin{proof}
        It is straightforward to verify that $I = T_{z} T_{\bar{z}} + z \otimes z$. Thus, we have
        \begin{align*}
            T_{\overline{z}} T_\psi T_\varphi T_z &= T_{\overline{z}} T_\psi ( T_{z} T_{\bar{z}} + z \otimes z ) T_\varphi T_z \\
            &= T_\psi T_\varphi + T_{\overline{z}} T_\psi (z\otimes z) T_{\varphi} T_z \\
            &= T_\psi T_\varphi + \left( T_{\overline{z}} T_{\psi} z \right) \otimes \left( T_{z}^* T_\varphi ^* z \right).
        \end{align*}
        This completes the proof of the lemma.
    \end{proof}

\begin{lemma}
    Let $\psi, \varphi \in  \mathcal T(\calD)$. If $\psi$ is antiholomorphic or $\varphi$ is holomorphic then 
    \begin{equation*}
        \ip{T_{\psi} T_{\varphi}f, g} = \ip{\frac{\dou (\psi \varphi f) }{\dou z} , g'}_{L^2 (\D)},\qquad f,g\in\mathcal D_0.
    \end{equation*}
    \label{lem:nature-of-anti-hol}
\end{lemma}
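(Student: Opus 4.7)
The plan is to first upgrade the inner product identity \eqref{lem:inner-product},
\[
\ip{T_{\varphi} f, g}_{\mathcal D_0} = \ip{\frac{\dou(\varphi f)}{\dou z}, g'}_{L^2(\D)},
\]
from $f, g$ in the span of $\{z^{n} : n \ge 1\}$ to arbitrary $f, g \in \mathcal D_0$ whenever $\varphi \in \mathcal T(\calD)$. Both sides are jointly continuous in $(f, g) \in \mathcal D_0 \times \mathcal D_0$: the left-hand side by boundedness of $T_{\varphi}$ from Theorem \ref{thm:bdd-top-op}, and the right-hand side because $\frac{\dou(\varphi f)}{\dou z} = \frac{\dou \varphi}{\dou z} f + \varphi f'$ with $\|\frac{\dou \varphi}{\dou z} f\|_{L^{2}(\D)}^{2} \le C \|f\|_{\mathcal D_0}^{2}$ by the Carleson hypothesis and $\|\varphi f'\|_{L^{2}(\D)} \le \|\varphi\|_{\infty}\|f\|_{\mathcal D_0}$. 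Density then extends the identity, and the same reasoning extends \eqref{action of densely defined toeplitz op} to $(T_{\varphi} f)' = \frac{\dou \varphi}{\dou z} f + P_{\calA}(\varphi f')$ for every $f \in \mathcal D_0$.

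In the case where $\varphi$ is holomorphic, Proposition \ref{Toeplitz symbol decomposition} gives $\varphi \in \mathcal M(\mathcal D_0)$, so $\varphi f \in \mathcal D_0$ whenever $f \in \mathcal D_0$. Since $\varphi f'$ is already holomorphic and in $L^{2}(\D)$, the extended action formula reduces to $(T_{\varphi} f)' = \varphi' f + \varphi f' = (\varphi f)'$; combined with $(T_{\varphi} f)(0) = 0 = (\varphi f)(0)$, this forces $T_{\varphi} f = \varphi f$. The extended inner product identity applied to $T_{\psi}(\varphi f)$ then yields
\[
\ip{T_{\psi} T_{\varphi} f, g}_{\mathcal D_0} = \ip{T_{\psi}(\varphi f), g}_{\mathcal D_0} = \ip{\frac{\dou(\psi \varphi f)}{\dou z}, g'}_{L^{2}(\D)}.
\]

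In the case where $\psi$ is antiholomorphic, one can write $\psi = \bar{u}$ with $u \in H^{\infty}(\D)$, so $\frac{\dou \psi}{\dou z} \equiv 0$. The extended inner product identity applied to $T_{\psi}$ at $T_{\varphi} f \in \mathcal D_0$ gives
\[
\ip{T_{\psi} T_{\varphi} f, g}_{\mathcal D_0} = \ip{\bar{u}\,(T_{\varphi} f)', g'}_{L^{2}(\D)},
\]
whereas the target quantity equals $\ip{\bar{u}\bigl(\tfrac{\dou \varphi}{\dou z} f + \varphi f'\bigr), g'}_{L^{2}(\D)}$. Their difference, after invoking the extended action formula, simplifies to $\ip{\bar{u}\,(P_{\calA}(\varphi f') - \varphi f'), g'}_{L^{2}(\D)} = \ip{P_{\calA}(\varphi f') - \varphi f',\, u g'}_{L^{2}(\D)}$. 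The crux is the observation that $u g' \in \calA$, being a product of the bounded holomorphic $u$ with the Bergman function $g' \in \calA$; since $\varphi f' - P_{\calA}(\varphi f') \perp \calA$, the difference vanishes.

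The main obstacle will be the antiholomorphic case, where $(T_{\varphi} f)'$ genuinely differs from the naive derivative $\frac{\dou(\varphi f)}{\dou z}$ by the Bergman projection error $\varphi f' - P_{\calA}(\varphi f')$, and antiholomorphicity of $\psi$ must be used in an essential way to allow this error to be paired against the holomorphic $L^{2}$ function $u g'$, at which point orthogonality to $\calA$ kills it. The holomorphic case becomes essentially automatic once one identifies $T_{\varphi} f$ with $\varphi f$ for $\varphi \in \mathcal M(\mathcal D_0)$.
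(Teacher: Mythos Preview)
Your proof is correct and follows essentially the same route as the paper's: both cases hinge on the inner-product identity \eqref{lem:inner-product} and the action formula \eqref{action of densely defined toeplitz op}, with the antiholomorphic case resolved by observing that $ug' \in \calA$ so that the Bergman-projection error $\varphi f' - P_{\calA}(\varphi f')$ is killed when paired against it. Your write-up is in fact more careful than the paper's, since you explicitly justify extending \eqref{lem:inner-product} and \eqref{action of densely defined toeplitz op} from polynomial $f,g$ to all of $\mathcal D_0$ by continuity, a step the paper simply takes for granted.
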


\begin{proof}
First assume that $\varphi$ holomorphic. Then for $f,g \in \calD,$  using \eqref{lem:inner-product}, we have that 
\begin{align*}
    \ip{T_{\psi} T_{\varphi} f, g} = \ip{\left( \parz {\psi} \right)  (\varphi f) + \psi (\varphi f)', g}_{L^2 (\D)} = \ip{\parz {(\psi \varphi f)}, g'}_{L^2 (\D)} \stepcounter{equation}\tag{\theequation}\label{eqn:hol}
\end{align*}
   Now assume that $\overline{\psi} =\chi$ is a holomorphic symbol.  Then for $f,g \in \calD$, using \eqref{action of densely defined toeplitz op} and \eqref{lem:inner-product}, we have that
    \begin{align*}
        \ip{T_{\psi}T_{\varphi}f, g} &= \ip{\frac{\dou  (\overline{\chi} T_{\varphi} f)}{\dou z}, g'}_{L^2 (\D)} \\
        &= \ip{\overline{\chi} \left( f \frac{\dou \varphi }{\dou z} + P_{\calA} (\varphi f') \right), g'}_{L^2 (\D)} \\
        &= \ip{f  \frac{\dou \varphi }{\dou z} + P_{\calA} (\varphi f'), \chi g'}_{L^2 (\D)} \\
        &= \ip{f  \frac{\dou \varphi }{\dou z} , \chi g'}_{L^2 (\D)} + \ip{(\varphi f'), \chi g'}_{L^2 (\D)} \\
        &=\ip{\overline{\chi} \left( f  \frac{\dou \varphi }{\dou z} + \varphi f' \right), g'}_{L^2 (\D)} \\
        &= \ip{ \frac{\dou \left( \psi \varphi f \right) }{\dou z}  , g'}_{L^2 (\D)}
\stepcounter{equation}\tag{\theequation}\label{eqn:antihol}\\
    \end{align*}
This completes the proof.
\end{proof}


Now we are ready to answer the question of when product of two Toeplitz operator is again a Toeplitz operator.
\begin{proposition}\label{Product Toeplitz}
    Let $\psi, \varphi \in \mathcal T(\calD)$. Then  $T_\psi T_\varphi = T_\tau$ for some symbol $\tau\in \mathcal T(\calD)$ if and only if $\psi$ is antiholomorphic or $\varphi$ is holomorphic. Moreover in either case we must have $\tau= P[(\psi \varphi)|_{_\mathbb T}].$ 
\end{proposition}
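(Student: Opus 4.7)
The plan is to combine Lemma \ref{lem:rank-one} with the Brown--Halmos characterization of Theorem \ref{thm:BHT theorem for D}. By Theorem \ref{thm:BHT theorem for D}, $T_\psi T_\varphi = T_\tau$ for some $\tau \in \mathcal T(\calD)$ if and only if $T_{\bar z}(T_\psi T_\varphi)T_z = T_\psi T_\varphi$. By Lemma \ref{lem:rank-one}, the obstruction to this identity is the rank-one operator $(T_{\bar z}T_\psi z) \otimes (T_z^* T_\varphi^* z)$, which vanishes if and only if $T_{\bar z}T_\psi z = 0$ or $T_z^* T_\varphi^* z = 0$. This is the central reduction.

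To characterize each of these vanishing conditions, I would use Proposition \ref{Toeplitz matrix repn}. Writing $\psi(z) = \sum_{k \ge 0}\hat\psi(k)z^k + \sum_{k \ge 1}\hat\psi(-k)\bar z^k$, the proposition gives $T_\psi z = \sum_{k \ge 1}\hat\psi(k-1)z^k$; since $T_{\bar z}$ acts as the backward shift on the orthogonal basis $\{z^n\}_{n \ge 1}$, we obtain $T_{\bar z}T_\psi z = \sum_{k \ge 1}\hat\psi(k)z^k$, which vanishes iff $\hat\psi(k)=0$ for every $k \ge 1$, i.e.\ iff $\psi$ is antiholomorphic. Symmetrically, the calculation $\langle T_z^* T_\varphi^* z, z^n \rangle_{\mathcal D_0} = \langle z, T_\varphi z^{n+1}\rangle_{\mathcal D_0} = \overline{\hat\varphi(-n)}$ for $n \ge 1$ yields $T_z^* T_\varphi^* z = \sum_{n \ge 1}\frac{\overline{\hat\varphi(-n)}}{n}\,z^n$, which vanishes iff $\hat\varphi(-n)=0$ for every $n \ge 1$, i.e.\ iff $\varphi$ is holomorphic. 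This completes the characterization.

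For the moreover part, once $T_\psi T_\varphi = T_\tau$ with $\tau \in \mathcal T(\calD)$, I would identify $\tau$ via its matrix entries in the basis $\{z^n\}_{n \ge 1}$. Consider Case 1, where $\varphi$ is holomorphic; then $\varphi \in \mathcal M(\mathcal D_0)$ by Proposition \ref{Toeplitz symbol decomposition}, so $T_\varphi z^n = \varphi z^n = \sum_{m \ge n}\hat\varphi(m-n) z^m$. Applying $T_\psi$ term-by-term (using boundedness of $T_\psi$) together with Proposition \ref{Toeplitz matrix repn}, the coefficient of $z^k$ in $T_\psi T_\varphi z^n$ equals $\sum_{j \ge 0}\hat\varphi(j)\hat\psi(k-n-j)$. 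On the other hand, since $\psi|_{_\mathbb T}, \varphi|_{_\mathbb T} \in L^\infty(\mathbb T)$ and Fourier coefficients of a product of $L^2$ functions are given by convolution, the $(k-n)$-th Fourier coefficient of $(\psi\varphi)|_{_\mathbb T}$ is precisely $\sum_{j \ge 0}\hat\varphi(j)\hat\psi(k-n-j)$ --- the sum being restricted to $j \ge 0$ by the holomorphy of $\varphi$. Since the Poisson extension preserves Fourier coefficients, this matches the corresponding Fourier coefficient of $P[(\psi\varphi)|_{_\mathbb T}]$. As a bounded harmonic function is determined by its Fourier coefficients, we conclude $\tau = P[(\psi\varphi)|_{_\mathbb T}]$. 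Case 2, where $\psi$ is antiholomorphic, is handled by an entirely analogous computation with the convolution restricted to the non-positive side.

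The main technical point will be the Fourier-convolution step in the moreover identification: one must verify that the formal convolution $\sum_j \hat\varphi(j)\hat\psi(\ell - j)$ truly produces the Fourier coefficients of the boundary product $(\psi\varphi)|_{_\mathbb T} \in L^1(\mathbb T)$. The holomorphy (or antiholomorphy) of one factor is essential here, as it reduces the convolution to a one-sided sum that matches the matrix coefficients produced by the expansion of $\psi$ through Proposition \ref{Toeplitz matrix repn}. Without such a one-sided structure, the coefficient match need not occur, which is consistent with the fact that $T_\psi T_\varphi$ is not generally Toeplitz.
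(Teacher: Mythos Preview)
Your argument for the ``if and only if'' part is essentially the same as the paper's: both reduce via Lemma~\ref{lem:rank-one} and the Brown--Halmos characterization to the vanishing of the rank-one term, and then identify the two vanishing conditions through Proposition~\ref{Toeplitz matrix repn}. Your explicit computation of $T_{\bar z}T_\psi z$ and $T_z^*T_\varphi^*z$ is exactly the ``routine calculation'' the paper alludes to.

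For the ``moreover'' identification of $\tau$, however, you take a genuinely different route. The paper proceeds analytically: using Lemma~\ref{lem:nature-of-anti-hol} it computes $\langle T_\psi T_\varphi \tilde E_w,\tilde E_w\rangle$ and compares it with the Berezin-type formula $\langle T_\tau \tilde E_w,\tilde E_w\rangle = w(1-|w|^2)\partial_z\tau(w)+\tau(w)$ derived earlier in the paper; letting $|w|\to 1^-$ and invoking the non-tangential boundary behavior of the Bergman Berezin transform yields $(\psi\varphi)|_{\mathbb T}=\tau|_{\mathbb T}$. Your approach is purely algebraic/Fourier-theoretic: you read off the matrix entries of $T_\psi T_\varphi$ in the basis $\{z^n\}$, recognize the resulting one-sided sum $\sum_{j\ge 0}\hat\varphi(j)\hat\psi(\ell-j)$ as the Fourier convolution giving $\widehat{(\psi\varphi)|_{\mathbb T}}(\ell)$ (the one-sidedness being forced by the holomorphy of $\varphi$), and conclude by uniqueness of harmonic extensions. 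Your method is more self-contained---it avoids Lemma~\ref{lem:nature-of-anti-hol} and the boundary-limit machinery entirely---while the paper's route ties the result to the Berezin picture developed in the proof of Theorem~\ref{thm:BHT theorem for D}. Both are correct; yours is arguably the more elementary identification once the Brown--Halmos theorem is in hand.
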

\begin{proof}
In view of Corollary \ref{toeplitz condition satisfies} and  Lemma \ref{lem:rank-one}, it follows that  the product $T_\psi T_\varphi = T_\tau$ for some $\tau\in\mathcal T(\calD)$ if and only if 
\begin{equation*}
    \left( T_{\overline{z}} T_{\psi} z \right) \otimes \left( T_{z}^* T_\varphi ^* z \right) = 0.
\end{equation*}
This implies that either $T_{\overline{z}} T_{\psi} z = 0$ or $T_{z}^* T_\varphi ^* z = 0$. A routine calculation using Proposition \ref{Toeplitz matrix repn}  shows that this happens if and only if either $\psi$ is antiholomorphic or $\varphi$ is holomorphic. Now, we show that $(\psi \phi)|_{_{\mathbb T}} = \tau|_{_{\mathbb T}}.$ By Lemma \ref{lem:nature-of-anti-hol}, we have 
\begin{align*}
    \ip{T_{\psi} T_{\varphi} \tilde{E}_w, \tilde{E}_w}
    &= \ip{\parz {(\psi \varphi \tilde{E}_w)}, \tilde{E}_w'}_{L^2 (\D)} \\
    &= \ip{\parz {\varphi} \tilde{E}_w, \tilde{K_w ^B}}_{L^2 (\D)} + \ip{\psi \varphi \tilde{K_w ^B}, \tilde{K_w ^B}}_{L^2 (\D)}  \\
    &= w(1-\abs{w}^2) \parz{ \varphi }(w) +\ip{\psi \varphi \tilde{K_w ^B}, \tilde{K_w ^B}}_{L^2 (\D)} .
\end{align*}
and a similar argument shows that 
\begin{align*}
    \ip{T_{\tau} \tilde{E}_w, \tilde{E}_w} &= w(1-\abs{w}^2) \parz \tau (w) + \tau (w).
\end{align*}
Since $\psi$, $\varphi$ are bounded harmonic functions, $\ip{\psi \varphi \tilde{K_w ^B}, \tilde{K_w ^B}}_{L^2 (\D)}$ must have non-tangential limits almost everywhere on $\T$, see \cite[Exercise 2, Chapter 2]{HKZ2012}. Now, taking radial limits as $\abs{w} \to 1$ in the two aforementioned equalities, we have that $(\psi \phi)|_{_{\mathbb T}} = \tau|_{_{\mathbb T}}.$ Since $\tau$ is a bounded harmonic function we must have $$\tau= P[\tau|_{_{\mathbb T}}] =P[(\psi \phi)|_{_{\mathbb T}}].$$ This completes the proof.
\end{proof}
Now we discuss the problem of characterizing two symbols for which the corresponding Toeplitz operators commute on $\calD.$

 \begin{proposition}
    Let $\psi , \varphi \in \mathcal T(\calD)$. Write
        \begin{equation*}
            \varphi (z) = \sum_{k\ge 0} \hat{\varphi} (k) z^k + \sum_{k \ge 1} \hat{\varphi} (-k) \overline{z}^k, \qquad z\in \D,
        \end{equation*}
        and
        \begin{equation*}
            \psi (z) = \sum_{k\ge 0} \hat{\psi} (k) z^k + \sum_{k \ge 1} \hat{\psi} (-k) \overline{z}^k,\qquad z\in \D.
        \end{equation*}
        The following are equivalent:
        \begin{enumerate}
            \item $T_{\varphi} T_{\psi} = T_{\psi} T_{\varphi}$ on $\calD$.
            \item $\hat{\varphi} (m) \hat{\psi} (-n) = \hat{\varphi} (-m) \hat{\psi} (n)$ for $m,n \ge 1$.
            \item Exactly one of the following is true:
            \begin{enumerate}
                \item $\varphi$ and $\psi$ is holomorphic.
                \item $\varphi$ and $\psi$ is antiholomorphic.
                \item A nontrivial linear combination of $\varphi$ and $\psi$ is constant in $\D$.
            \end{enumerate}
        \end{enumerate}
    \end{proposition}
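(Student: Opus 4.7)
My plan is to establish the three-way equivalence by proving $(3) \Rightarrow (1)$ directly via Proposition~\ref{Product Toeplitz}, and proving $(1) \Rightarrow (3)$ through the Brown--Halmos-type Lemma~\ref{lem:rank-one}; condition $(2)$ drops out as the coordinate form of the rank-one identity produced in the converse direction.

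For $(3) \Rightarrow (1)$ I treat the three cases separately. In case (a), both symbols are holomorphic, so Proposition~\ref{Product Toeplitz} yields $T_{\varphi}T_{\psi}=T_{P[(\varphi\psi)|_{_{\T}}]}=T_{\psi}T_{\varphi}$. Case (b) is the symmetric antiholomorphic argument. In case (c), a nontrivial combination $c_1\varphi+c_2\psi$ is constant; assuming $c_2\ne 0$ we write $\psi=\alpha+\beta\varphi$, and since $T_{1}=I$ on $\mathcal D_{0}$ by Proposition~\ref{Toeplitz matrix repn}, we get $T_{\psi}=\alpha I+\beta T_{\varphi}$, which commutes with $T_{\varphi}$.

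The heart of the argument is $(1) \Rightarrow (3)$. I apply Lemma~\ref{lem:rank-one} in both orderings and subtract; the $T_{\bar z}(\cdot)T_{z}$ terms cancel because of the commutativity hypothesis, leaving
\[
(T_{\bar{z}}T_{\psi}z)\otimes(T_{z}^{*}T_{\varphi}^{*}z) \;=\; (T_{\bar{z}}T_{\varphi}z)\otimes(T_{z}^{*}T_{\psi}^{*}z).
\]
Using Proposition~\ref{Toeplitz matrix repn} together with a direct adjoint computation on the basis $\{z^{n}\}_{n\ge 1}$, I identify the four vectors as $T_{\bar z}T_{\psi}z=\sum_{j\ge 1}\hat{\psi}(j)z^{j}$, capturing the nonconstant holomorphic part of $\psi$, and $T_{z}^{*}T_{\varphi}^{*}z=\sum_{k\ge 1}k\,\hat{\varphi}(-k)z^{k}$, a weighted form of the nonconstant antiholomorphic part of $\varphi$, with symmetric expressions for the swapped pair. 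Hence each of these vectors vanishes precisely when the corresponding symbol is holomorphic or antiholomorphic.

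The proof then concludes with a case split on the rank-one identity. If both sides vanish, the vanishing criteria combine into $[\psi~\text{antiholomorphic or}~\varphi~\text{holomorphic}]$ \emph{and} $[\varphi~\text{antiholomorphic or}~\psi~\text{holomorphic}]$; the four resulting subcases correspond to $(3a)$, $(3b)$, or a degenerate instance of $(3c)$ with a constant symbol. If both sides are nonzero rank-one operators, the elementary fact that $u\otimes v=x\otimes y$ with all four vectors nonzero forces $u\parallel x$ and $v\parallel y$ with reciprocal proportionality, yielding $\hat{\psi}(j)=\lambda\hat{\varphi}(j)$ for all $j\ge 1$ and $\hat{\psi}(-k)=\lambda\hat{\varphi}(-k)$ for all $k\ge 1$, so that $\psi-\lambda\varphi$ is constant on $\D$, which is $(3c)$; reading the same identity entrywise recovers $(2)$. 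The main obstacle is the bookkeeping in this case analysis, in particular confirming that the same scalar $\lambda$ is forced on both the holomorphic and antiholomorphic coefficients, so that the resulting linear combination is genuinely constant on $\D$ rather than merely harmonic.
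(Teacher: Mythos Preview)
Your approach is essentially the paper's own: both arguments hinge on Lemma~\ref{lem:rank-one} to reduce commutativity to the rank-one identity
\[
(T_{\bar z}T_{\psi}z)\otimes(T_{z}^{*}T_{\varphi}^{*}z)=(T_{\bar z}T_{\varphi}z)\otimes(T_{z}^{*}T_{\psi}^{*}z),
\]
and the paper then passes through condition $(2)$ while you go directly to the trichotomy $(3)$, filling in the case analysis that the paper delegates to the references.

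One small slip worth fixing: a direct computation with $\|z^{n}\|_{\calD}^{2}=n$ gives
\[
T_{z}^{*}T_{\varphi}^{*}z=\sum_{k\ge 1}\frac{\overline{\hat{\varphi}(-k)}}{k}\,z^{k},
\]
not $\sum_{k\ge 1}k\,\hat{\varphi}(-k)z^{k}$. This does not affect your vanishing criterion, and it is precisely the conjugate here that makes your ``same $\lambda$'' bookkeeping come out right: from $u\otimes v=x\otimes y$ one gets $u=\lambda x$ and $y=\bar{\lambda}v$, and the conjugate in the second factor then yields $\hat{\psi}(-k)=\lambda\,\hat{\varphi}(-k)$, matching the holomorphic side. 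With your stated formula (no conjugate) you would instead obtain $\hat{\psi}(-k)=\bar{\lambda}\,\hat{\varphi}(-k)$, and $\psi-\lambda\varphi$ would not be constant in general.
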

\begin{proof}
        We first show that $(1) \Longleftrightarrow (2)$. Using Lemma \ref{lem:rank-one} and Proposition \ref{Toeplitz matrix repn}, we find that 
\begin{align*}
            T_{\varphi} T_{\psi} = T_{\psi} T_{\varphi} & \Longleftrightarrow \left( T_{\overline{z}} T_{\varphi} z \right) \otimes \left( T_{z}^* T_\psi ^* z \right) =  \left( T_{\overline{z}} T_{\psi} z \right) \otimes \left( T_{z}^* T_\varphi ^* z \right) \\
            &\Longleftrightarrow \ip{T_{\psi} z^{n+1}, z} T_{\overline{z}} T_{\varphi} z = \ip{T_{\varphi} z^{n+1}, z} T_{\overline{z}} T_{\psi} z,  &\qquad n \ge 1, \\
            &\Longleftrightarrow \hat{\varphi} (m) \hat{\psi} (-n) = \hat{\varphi} (-m) \hat{\psi} (n), &\qquad m,n \ge 1.
        \end{align*}
        The fact $(2) \Longleftrightarrow (3)$ is routine and well studied. We leave it for the reader to verify, see \cite{BHT63,Lee2007} for similar arguments.
    \end{proof}
We conclude the paper with a brief discussion on the question of  when the product of two Toeplitz operators on $\calD$ is a compact perturbation of some Toeplitz operator on $\calD$. For any $f\in L^1(\mathbb D),$ let ${\bf B}(f)$ denotes the Berezin transform of $f,$ see \cite[Chapter 2]{HKZ2012} for the definitions and the properties of the Berezin transform and $C_0$ denotes the space of all continuous functions $g$ on $\D$ such that $g(a)\to 0$ as $|a|\to 1-.$ We find that the result of Y. J. Lee in \cite{Lee2007} on the compact product problem of when $T_{\psi}T_{\varphi} - T_{\tau}$ is compact on $\mathcal D_0$ for $\varphi,\psi,\tau\in \Omega \cap h(\D)$ naturally extends for the symbols $\varphi,\psi,\tau\in \mathcal T (\calD).$ We state the result in the next proposition and omit the proof here. One may imitate the argument presented in \cite[Theorem D]{Lee2007} to obtain the following:
\begin{proposition}\label{compact product}
    Let  $\psi, \varphi, \tau \in \mathcal T(\calD)$. Then the operator $T_{\psi}T_{\varphi} - T_{\tau}$ is a compact operator if and only ${\bf B}({\psi \varphi})-\tau\in C_0.$
\end{proposition}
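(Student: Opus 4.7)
The plan is to adapt the argument from \cite[Theorem D]{Lee2007} to the larger symbol class $\mathcal T(\calD)$, using the Berezin-type identity \eqref{Berezin action formula} as the main calculational tool and the weak-null behaviour of the normalised reproducing kernels $\tilde{E}_w$ of $\mathcal D_0$ as the main qualitative input.

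For the forward implication, I would first verify that $\tilde{E}_w \to 0$ weakly in $\mathcal D_0$ as $|w|\to 1^{-}$: the family is norm-bounded, and a direct computation gives $\ip{\tilde{E}_w, z^n}_{\mathcal D_0} = n(1-|w|^2)\bar{w}^{n-1}$, so the pairing against any fixed basis vector tends to $0$. Compactness of $T_\psi T_\varphi - T_\tau$ then forces $\ip{(T_\psi T_\varphi - T_\tau)\tilde{E}_w,\tilde{E}_w} \to 0$. Using \eqref{Berezin action formula} applied to $T_\tau$, together with the little-Bloch estimate $(1-|w|^2)|\tau_1'(w)|\to 0$ (available because the holomorphic part $\tau_1$ lies in $\mathcal D_0 \subseteq \mathcal B_0$), one gets $\ip{T_\tau\tilde{E}_w,\tilde{E}_w}=\tau(w)+o(1)$. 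For the product, I would decompose $\psi=\psi_1+\psi_2$ and $\varphi=\varphi_1+\varphi_2$ via Proposition \ref{Toeplitz symbol decomposition} and expand $\ip{T_\psi T_\varphi\tilde{E}_w,\tilde{E}_w}$ into four pieces. Three of them are genuine Toeplitz operators by Proposition \ref{Product Toeplitz}, and their Berezin integrals can be evaluated by \eqref{Berezin action formula}, each differing from the naive boundary value $\psi_i(w)\varphi_j(w)$ by a $(1-|w|^2)$-weighted derivative of a $\mathcal B_0$ function, hence $o(1)$. The fourth piece $T_{\psi_1}T_{\varphi_2}$ (holomorphic times antiholomorphic) I would compute directly, using that $\psi_1 \in \mathcal M(\calD)$ acts as multiplication on $\mathcal D_0$ together with the identity $\tilde{E}_w'=\tilde{K}_w^B$; its Berezin value produces $\ip{\psi_1\varphi_2\,\tilde{K}_w^B,\tilde{K}_w^B}_{L^2(\mathbb D)}$ up to $o(1)$. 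Summing the four pieces yields $\ip{T_\psi T_\varphi\tilde{E}_w,\tilde{E}_w}={\bf B}(\psi\varphi)(w)+o(1)$, and consequently ${\bf B}(\psi\varphi)-\tau \in C_0$.

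For the sufficiency direction, I would follow Lee's template in the same four-term expansion: three of the pieces are Toeplitz operators with symbols that are Poisson extensions of boundary products, and the hypothesis ${\bf B}(\psi\varphi)-\tau\in C_0$ identifies, via the Fatou-type boundary behaviour of the Berezin transform of a bounded function, the sum of their harmonic symbols with $\tau$. The remaining piece $T_{\psi_1}T_{\varphi_2}$ becomes a semicommutator after subtracting $T_{P[(\psi_1\varphi_2)|_{_\mathbb T}]}$, and its compactness under the boundary-vanishing Berezin symbol hypothesis is a Hankel-type statement for which Lee's argument in \cite[Theorem D]{Lee2007} applies verbatim, once the Ces\`aro truncation of Proposition \ref{lem:approx-by-hom-parts} is invoked to reduce matters to polynomial symbols.

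The main obstacle is the converse direction, specifically the Hankel-type compactness criterion for $T_{\psi_1}T_{\varphi_2} - T_{P[(\psi_1\varphi_2)|_{_\mathbb T}]}$ when $\psi_1 \in \mathcal M(\calD)$ and $\varphi_2 \in \overline{H^\infty_0(\mathbb D)}$. In Lee's setting the symbols lie in $\mathscr C^1(\mathbb D)$ with bounded partial derivatives, whereas here one only has the Carleson-measure property of $|\partial_z\psi|^2\,dA$. Verifying that each estimate in Lee's approximation argument survives under this weaker hypothesis — in particular the Ces\`aro truncation together with a dominated-convergence-style passage to the limit inside the Berezin pairing — is the technical heart of the proof and is where one must be most careful.
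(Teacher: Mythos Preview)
The paper does not actually prove this proposition: it explicitly omits the proof and states that ``one may imitate the argument presented in \cite[Theorem D]{Lee2007}.'' Your proposal does precisely that---it sketches Lee's strategy adapted to the symbol class $\mathcal T(\calD)$, using the Berezin-type identity \eqref{Berezin action formula} and the decomposition of Proposition \ref{Toeplitz symbol decomposition}---so your approach is exactly what the paper intends, and in fact you supply considerably more detail than the paper itself.
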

     \part*{\textbf{Declarations}}
\noindent Conflict of interest : The authors declare that there are no conflict of interest regarding the publication of this paper.

\part*{\textbf {Acknowledgments}}
\noindent The authors wish to express their sincere thanks to Prof. Sameer Chavan for his several valuable comments and suggestions.


\end{document}